\numberwithin{equation}{section}
\newtheorem{theo}{Theorem} 
\newtheorem{lem}{Lemma}
\newtheorem{cor}{Corollary}
\newtheorem{remark}{Remark}
\begin{document}

\title{On some  discrete Bonnesen-style isoperimetric inequalities}

\author[Zeng and Dong]{Chunna Zeng and Xu Dong}

\address{School of Mathematics and Statistics,
 Chongqing Normal University,
Chongqing 401332, People's Republic of China}
\email{zengchn@163.com}

\address{School of Mathematics and Statistics,
 Chongqing Normal University,
Chongqing 401332, People's Republic of China}
\email{dx13370724790@163.com}


\thanks{This work is supported in part by the Major Special Project of NSFC (Grant No. 12141101), the Young Top-Talent program of Chongqing (Grant No. CQYC2021059145), NSF-CQCSTC (Grant No. cstc2020jcyj-msxmX0609) and Technology Research Foundation of Chongqing Educational committee (Grant No. KJQN201900530, KJZD-K202200509).}
\thanks{{\it Keywords}: Schur-convex (concave) functions, convex polygons, analytic isoperimetric inequality, discrete Bonnesen-style isoperimetric inequality.}

\begin{abstract}
This article deals with the  sharp discrete  isoperimetric inequalities  in analysis and geometry for planar convex polygons. First, the  analytic isoperimetric inequalities based on Schur convex function  are established. In the wake of the analytic isoperimetric inequalities, Bonnesen-style isoperimetric inequalities and  inverse  Bonnesen-style inequalities for the planar convex polygons are obtained.


\end{abstract}

\maketitle

\section{Introduction }
The isoperimetric problem is an ancient problem in geometry that has  a significant impact on various branches of mathematics. The problem deals with finding a closed curve or surface that encloses the maximum area or volume for a given perimeter or boundary length. The geometric inequalities derived from  the isoperimetric problem have far-reaching effects and connections to other areas of mathematics.
In the 1950s, a connection was discovered between the isoperimetric problem and the Sobolev embedding problem. The link between the two problems provided insights into the relationship between geometric and functional analysis. In the 1970s, the Aleksandrov-Fenchel inequality, which is a generalization of the isoperimetric inequality, was found to be closely related to the Hodge index theorem in algebraic geometry. In recent decades, there has been extensive research on convex geometry related to the isoperimetric problem. This research has established important connections among various fields, including functional analysis, harmonic analysis, affine geometry, partial differential equations, and information theory.

 The complete mathematical proof of isoperimetric inequality was not established until the variational method based on calculus appeared in the 19th century. After this breakthrough, various proofs for the isoperimetric inequality were developed. In 1962, Hurwitz proposed two ingenious methods to prove the isoperimetric inequalities. He employed Fourier analysis in his proofs, one for convex curves and the other for general curves. These methods were detailed in his works \cite[Section 4.2]{Groemer H1996Geometric} and \cite[Page 392-394]{Hurwitz A1902Sur}, respectively.
 The planar isoperimetric inequality can be derived as an immediate consequence of  Poincar\'{e} formula \cite[Section 23]{Blaschke1949Chelsea}, which can be founded in references
  such as \cite[Chapter 7, Section 7]{Hardy G1988Inequalities} and \cite[Page 1183-1185]{Ossermam1978The}. Zhang demonstrated that the Sobolev inequality, a key result in functional analysis, is a special form of the isoperimetric inequality (\cite{Zhang G1999The affine Sobolev}.
  Zhou  utilized the fundamental  kinematic formula  to establish  Bonnesen-type isoperimetric inequalities and other new inequalities  in integral geometry (\cite{Ren D1994 Topics, Kinematic1, Kinematic2}).  It is worth mentioning that  Zhang \cite{Zhang X1996ARefine} introduced the discrete Wirtinger inequality to obtain some new analytical isoperimetric inequalities.

The description of classical isoperimetric inequality in the Euclidean plane $\mathbb R^{2}$ is: assume that $K$ is a domain with length $L$ and area $A$, then
\begin{equation*}
L^2-4\pi A\geq 0,
\end{equation*}
where the equality holds if and only if $K$ is a disc.

In 1920, Bonnesen discovered  a series of inequalities  in $\mathbb{R}^2$ as follows
(\cite{Ren D1994 Topics, Ossermam1978The, Ossermam1979, Zeng C2010The Bonnesen isoperimetric})
\begin{equation}
\Delta\left(K\right)=L^2-4\pi A\geq B_K,
\end{equation}
where $B_k$ is non-negative with geometric significance and vanishes only when $K$ is a disc.

Denote by $\Delta\left(K\right)=L^2-4\pi A$ the isoperimetric deficit of  $K,$ and $B_K$  measures the ``deviation"  between $K$ and a disc.
Many $B_K$'s for planar  domains  are found by  mathematicians. For instance, Bonnesen, Zhang, Zhou etc.~obtained a series of Bonnesen-type isoperimetric inequalities (\cite{Ossermam1978The, Ossermam1979, Zeng2012Thesymmetric}): assume that $K$ is the domain of length $L$ and area $A.$ Let $r_K$ and $R_K$ be the maximum inscribed radius  and minimum circumscribed radius, respectively. Then
\begin{equation*}
\pi r^2-Lr+A\le 0;
\end{equation*}
\begin{equation*}
\frac{L-\sqrt{L^2-4\pi A}}{2\pi}\le r_K\le\frac{L}{2\pi}\le R_k\le\frac{L-\sqrt{L^2+4\pi A}}{2\pi};
\end{equation*}
and
\begin{equation*}
L^2-4\pi A\ge A^2\left(\frac{1}{R_K}-\frac{1}{r_K}\right)^2;\ \ \ \ L^2-4\pi A\ge L^2\left(\frac{R_K-r_K}{R_K+r_K}\right)^2;
\end{equation*}

\begin{equation*}
L^2-4\pi A\ge A^2\left(\frac{1}{R_K}-\frac{1}{r}\right)^2;\ \ \ \ L^2-4\pi A\ge L^2\left(\frac{r-R_K}{r+R_K}\right)^2;
\end{equation*}

\begin{equation*}
L^2-4\pi A\ge A^2\left(\frac{1}{r}-\frac{1}{r_K}\right)^2;\ \ \ \ L^2-4\pi A\ge L^2\left(\frac{r_K-r}{r_K+r}\right)^2.
\end{equation*}
With equalities hold when and only when $K$ is a disc.

Comparing these $B_K$'s and determining the best lower bound poses a challenge, and mathematicians are still pursuing to discover  these unknown invariants of geometrical significance.

Rencently, mathematicians have  turned their  attention towards   the discrete isoperimetric problem, such as polygons or polyhedrons.
In fact, by establishing a series of  analytic inequalities that related to solutions of  nonlinear second-order differential equalities, Zhang \cite[Section 3]{Zhang X1996ARefine} obtained some discrete isoperimetric inequalities of polygons in $\mathbb{R}^2$.

Assume that $H_n$ is an $n$-sided planar convex  polygon. Denote by $L_n$ and $A_n$   length and  area of $H_n$, respectively. Then
\begin{equation}\label{fc12}
L_n^2-4d_nA_n\geq0,\ \ \ \ d_n=n\tan{\frac{\pi}{n}}.
\end{equation}
The quantity $L_n^2-4d_nA_n$ is called  the isoperimetric deficit of  $H_n$. The sharp discrete Bonnesen-style isoperimetric inequality  has the following form
\begin{equation}\label{fc13}
L_n^2-4d_nA_n\geq U_n,
\end{equation}
where $U_n$ is  non-negative with geometric significance and vanishes only when $H_n$ is a regular polygon.
$U_n$ measures the ``deviation" of $H_n$ from ``regularity".

Utilizing analytic inequalities such as the discrete Wirtinger inequality and Schur-convex (concave) function have been proved to be an  effective method  for solving discrete isoperimetric problem (see \cite{Wang W2018Schur, Zhang X1996ARefine, Zhang X1997Bonnesen-style, Zhang X1998Schur-Convex}). However, the discrete Bonnesen-style isoperimetric inequalities remain largely unexplored, with only a few  inequalities discovered in $\mathbb R^{2}$.
As noted in Zhang's paper  \cite{Zhang X1996ARefine}, a convex polygon that is inscribed in a circle  is called as ``cyclic" and such polygons always enclose the largest area. Therefore, in studying geometric inequalities for planar convex polygon, one only needs to consider cyclic polygons.

Assume that ${\Lambda}_n$ is an $n$-sided planar convex  polygon with perimeter  $L_n$ and area $A_n$, which is  inscribed in a circle with radius $R$. Meanwhile, there exist a regular polygon with perimeter $L_n^\ast$ and area $A_n^\ast$ inscribed in the same circle  ${\Lambda}_n$.   Zhang \cite{Zhang X1997Bonnesen-style} achieved the following geometric inequality for planar convex polygon in $\mathbb{R}^2$
\begin{equation}
L_n^2-4d_nA_n\geq\left(L_n^\ast-L_n\right)^2,\ \ \ \ d_n=n\tan{\frac{\pi}{n}},
\end{equation}
where equality holds  when and only when $\Lambda_n$ is a regular polygon.


In this paper, inspired by Zhang, Qi and Ma's work (\cite{Zhang X1998Schur-Convex,Wang W2018Schur}),  by establishing two analytic isoperimetric inequalities (Theorem \ref{qb}, \ref{2}), we derive a series of discrete Bonnesen-style isoperimetric inequalities (Theorem \ref{3}, \ref{4}) and their reverse forms (Theorem \ref{theo5}, \ref{6}). In Section five,
we  make up for the shortcomings in the proof of Ma's  theorem in  \cite{Ma L2015A Bonnesen-style},
and  present a new proof of  Zhang and Ma's results (\cite{Zhang X1997Bonnesen-style,Ma L2015A Bonnesen-style})(Theorem \ref{qe}). Meanwhile, we suppose that on surfaces of constant curvature, Schur convexity also will be a convenient and excellent idea to explore geometric  inequalities on surfaces of constant curvature.
%
%
%
%

\section{ isoperimetric inequalities in analysis}

The Schur-convex (concave)  function was introduced by  Schur in 1923  (\cite{Schur I1923Uber}) and  has many important applications in analytic inequalities. In this section,
we review some basic facts  about  Schur-convex (concave)  function and give two important  analytic inequalities.

An $n\times n$ matrix $P=\left[p_{ij}\right]$ is called a  doubly stochastic matrix if $p_{ij}\geq0$ for $1\le i$,~$j\le n$, and
\begin{equation*}
\sum_{j=1}^{n}p_{ij}=1,\ \ i=1, 2, \cdots, n;\ \ \ \ \  \sum_{\ i=1}^{n}p_{ij}=1,\ \ j=1, 2, \cdots, n.
\end{equation*}
For instance, $P=\left[p_{ij}\right]$ with $p_{ij}=\frac{1}{n}$, $1\le i$,~$j\le n$ is  a doubly stochastic matrix.

Let $I$ be an open interval of the real number line $\textbf{R}$, and  $I^n=I\times I\times\cdots\times I$.
A real function $f:I^n\rightarrow\textbf{R}$ ($n\geq 2$) is Schur-convex if for any doubly stochastic matrix $P$ and  $x\in I^n$, then
\begin{equation}\label{fc21}
f(Px)\le f(x).
\end{equation}
 Furthermore, $f$ is called  strictly Schur-convex if  (\ref{fc21}) is strict.~Similarly, $f$ is  strictly Schur-concave if  (\ref{fc21}) is inverse and strict.

A real function $f: I^n\rightarrow \textbf{R}$ ($n\geq 2$) is   symmetric if for any permutation matrix $T$ and  $x\in I^n$
\begin{equation}
f(Tx)=f(x).
\end{equation}

Every Schur-convex function is symmetric, but not every symmetric function can be a Schur-convex function. The following lemma is defined as the Schur's condition.
\begin{lem}\emph{(\cite{Wayne A1973Convex Functions})}\label{qz}
Assume that the real function $f\left(x\right)=f\left(x_1,x_2,\cdots,x_n\right)$ with symmetry has continuous partial derivatives on $I^n$. Then
$f: I^n\rightarrow \textbf{R}$ is Schur-convex (concave) when and only when
\begin{equation}\label{fc23}
\left(x_i-x_j\right)\left(\frac{\partial f}{\partial x_i}-\frac{\partial f}{\partial x_j}\right)\geq0\left(\le0\right).
\end{equation}
Furthermore $f$ is  strictly Schur-convex if inequality (\ref{fc23}) is strict for $x_i\neq x_j$,~$1\le i$,~$j\le n$.
\end{lem}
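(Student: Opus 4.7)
The plan is to reduce both implications to a one-parameter computation along a so-called T-transform. For indices $i \neq j$ and a parameter $\lambda \in [0,1]$, let $T_\lambda^{ij}$ denote the doubly stochastic matrix that leaves every coordinate of $x$ fixed except
\[
x_i \mapsto \lambda x_i + (1-\lambda) x_j,\qquad x_j \mapsto (1-\lambda) x_i + \lambda x_j,
\]
and set $g(\lambda) = f(T_\lambda^{ij} x)$. By the chain rule,
\[
g'(\lambda) = (x_i - x_j)\left(\frac{\partial f}{\partial x_i} - \frac{\partial f}{\partial x_j}\right)\bigg|_{T_\lambda^{ij} x}.
\]
The difference between the two affected coordinates of $T_\lambda^{ij} x$ equals $(2\lambda - 1)(x_i - x_j)$, so hypothesis (\ref{fc23}) applied at the point $T_\lambda^{ij} x$ reads precisely $(2\lambda - 1)\,g'(\lambda) \geq 0$. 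This single identity will drive both directions.

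For the sufficiency, I would invoke the classical Hardy--Littlewood--P\'olya reduction: every action $x \mapsto Px$ of a doubly stochastic matrix can be realized by a finite sequence of T-transforms. It therefore suffices to verify $f(T_\lambda^{ij} x) \leq f(x)$. The sign condition $(2\lambda-1)\,g'(\lambda) \geq 0$ forces $g$ to be non-increasing on $[0,1/2]$ and non-decreasing on $[1/2,1]$, and symmetry of $f$ gives $g(0) = g(1) = f(x)$. Hence $g(\lambda) \leq g(1) = f(x)$ for every $\lambda \in [0,1]$, which yields (\ref{fc21}) after iterating over the T-transforms composing $P$.

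For the necessity, assume $f(Px) \leq f(x)$ for every doubly stochastic $P$. Specializing to $P = T_\lambda^{ij}$ gives $g(\lambda) \leq g(1)$ for all $\lambda \in [0,1]$, so $g'(1^-) \geq 0$; evaluating the derivative formula at $\lambda = 1$, where $T_1^{ij} x = x$, produces (\ref{fc23}) at the arbitrary point $x$. The Schur-concave statement is obtained by reversing inequalities throughout, and the strict version follows because strict inequality in (\ref{fc23}) at pairs with $x_i \neq x_j$ makes $g$ strictly monotone on $[1/2,1]$, so that $g(\lambda) < g(1)$ whenever the T-transform acts nontrivially.

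The main obstacle in a fully self-contained write-up is the Hardy--Littlewood--P\'olya reduction to T-transforms, which is itself a nontrivial combinatorial fact (essentially equivalent to the majorization characterization of doubly stochastic maps via Birkhoff--von Neumann). Since the lemma is quoted from the literature, I would cite this reduction rather than reprove it; once it is in hand the remaining work is a short one-variable monotonicity argument, and the only delicate bookkeeping is keeping track of the point $T_\lambda^{ij} x$ at which the partial derivatives in $g'(\lambda)$ are evaluated.
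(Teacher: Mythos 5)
The paper offers no proof of Lemma \ref{qz} at all---it is quoted from the literature (\cite{Wayne A1973Convex Functions})---so you are supplying an argument where the authors rely on a citation; what you give is the standard Schur--Ostrowski proof via T-transforms, and it is essentially correct. The core computation is right: $g'(\lambda)=(x_i-x_j)\bigl(\partial f/\partial x_i-\partial f/\partial x_j\bigr)\big|_{T_\lambda^{ij}x}$, the two affected coordinates of $T_\lambda^{ij}x$ differ by $(2\lambda-1)(x_i-x_j)$, so condition (\ref{fc23}) evaluated at $T_\lambda^{ij}x$ is exactly $(2\lambda-1)g'(\lambda)\ge 0$, and the necessity direction via $g(\lambda)\le g(1)$ hence $g'(1^-)\ge 0$ is clean; symmetry of $f$ is correctly used only to get $g(0)=f(x)$. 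Three points deserve more care in a full write-up. First, the Hardy--Littlewood--P\'olya reduction must be stated as a statement about orbits: $Px\prec x$ and any vector majorized by $x$ is reachable from $x$ by finitely many T-transforms; it is false for $n\ge 3$ that $P$ itself factors into T-transforms, so your phrase ``the action can be realized'' is the right one, but the aside identifying this with Birkhoff--von Neumann blurs that distinction. Second, one should note that every intermediate vector stays in $I^n$, since each new coordinate is a convex combination of two coordinates lying in the interval $I$, so hypothesis (\ref{fc23}) is available at each step of the chain. Third, the strictness claim needs the extra observation that a T-transform acting on two equal coordinates leaves the vector unchanged; hence if $Px$ is not a permutation of $x$, some step of the chain acts nontrivially on distinct coordinates and yields a strict decrease, while the remaining steps are non-increasing---this is what delivers the strict version actually used later through Lemmas \ref{qccc} and \ref{qd}. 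With these small additions your sketch is a complete and self-contained proof, at the cost of importing the nontrivial T-transform reduction that the paper avoids by citing the lemma.
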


Due to the symmetry of  $f(x)$, the Schur's condition (\ref{fc23}) can be rewritten as (\cite[Page 57]{Albert1979Inequalities})
\begin{equation}\label{fc24}
\left(x_1-x_2\right)\left(\frac{\partial f}{\partial x_1}-\frac{\partial f}{\partial x_2}\right)\geq0\left(\le0\right),
\end{equation}
and $f$ is strictly Schur-convex if  (\ref{fc24}) is strict for $x_1\neq x_2$.

The following notations will be often used in the latter part of this article.

\begin{equation*}
I=\left(0,l\right);\ \ \ H_n=\Big\{\mathrm{\Theta}=\left(\theta_1,\cdots,\theta_n\right)\in \mathbb{R}^n,~\sum_{i=1}^{n}\theta_i=ml\Big\}\left(0<m<n\right);
\end{equation*}
\begin{equation*}
D_n=I^n\cap H_n;\ \ \ \mathrm{\Omega}=\left(\sigma,\cdots,\sigma\right)~where  ~\sigma=\frac{1}{n}\sum_{i=1}^{n}\theta_i=\frac{ml}{n}.
\end{equation*}

\begin{lem}\label{qccc}
If  $f$: $I^n\rightarrow \textbf{R}$ is a  Schur-concave function on $I^n$, then $f\left(\mathrm{\Omega}\right)$ is a global maximum in $D_n$. If  $f$ is strictly Schur-concave on $I^n$, then $f\left(\mathrm{\Omega}\right)$ is the unique global maximum in $D_n$.
\end{lem}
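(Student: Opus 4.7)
The plan is to exhibit a single doubly stochastic matrix that sends every $\Theta\in D_n$ to $\Omega$ and then to promote the resulting inequality to a strict one by a one-parameter path argument.

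For the non-strict statement, take $P$ to be the $n\times n$ matrix with every entry equal to $1/n$; this is precisely the doubly stochastic matrix highlighted as an example in the preceding paragraph. A one-line calculation gives
\[
(P\Theta)_i=\frac{1}{n}\sum_{j=1}^{n}\theta_j=\frac{ml}{n}=\sigma
\]
for every index $i$, so $P\Theta=\Omega$. The hypotheses $0<m<n$ and $l>0$ force $0<\sigma<l$, placing $\Omega$ inside $D_n$. Applying the Schur-concave version of (\ref{fc21}) to $P$ and $\Theta$ yields $f(\Omega)=f(P\Theta)\ge f(\Theta)$ for every $\Theta\in D_n$, which is the first assertion.

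For the strict case, suppose $\Theta\in D_n$ with $\Theta\neq\Omega$. I would interpolate along the segment $\Theta(t)=(1-t)\Theta+t\,\Omega$, $t\in[0,1]$, which remains in $D_n$ by convexity of $I^n$ and linearity of the sum constraint. Setting $g(t)=f(\Theta(t))$ and writing $\theta_i(t)=(1-t)\theta_i+t\sigma$, the chain rule gives $g'(t)=\sum_i(\sigma-\theta_i)\,\partial_i f(\Theta(t))$. Exploiting the symmetry of $f$ and the identity $\sigma-\theta_i=\frac{1}{n}\sum_j(\theta_j-\theta_i)$, this derivative admits the pairwise-antisymmetric rewriting $g'(t)=-\frac{1}{2n(1-t)}\sum_{i,j}\bigl(\theta_i(t)-\theta_j(t)\bigr)\bigl(\partial_i f-\partial_j f\bigr)$, to which Lemma \ref{qz} applies termwise. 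The strict form of Schur's condition (\ref{fc24}) makes every summand non-positive, and strictly negative whenever two coordinates of $\Theta(t)$ differ; because $\Theta\neq\Omega$ forces some $\theta_i\neq\theta_j$ in $\Theta$, that same pair remains unequal in $\Theta(t)$ for every $t\in[0,1)$. Hence $g'(t)>0$ on $[0,1)$, and integrating gives $f(\Omega)=g(1)>g(0)=f(\Theta)$.

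The main obstacle is the strict case: one must algebraically rewrite the directional derivative into a pairwise-antisymmetric form so that the two-variable strict Schur condition can be applied termwise, and one must track which pairs of coordinates remain unequal along the interpolation toward $\Omega$. Once this bookkeeping is in place, the rest is a clean sign argument followed by a one-dimensional integration.
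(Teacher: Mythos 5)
Your non-strict half is exactly the paper's argument: the all-$\frac{1}{n}$ doubly stochastic matrix $P$ satisfies $P\Theta=\Omega$ for every $\Theta\in D_n$, and Schur-concavity gives $f(\Omega)=f(P\Theta)\geq f(\Theta)$. That part is fine.

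The strict half has a genuine gap. Lemma \ref{qccc} assumes only that $f$ is strictly Schur-concave, with strictness defined through the matrix inequality (\ref{fc21}); it does not assume $f$ has continuous partial derivatives, so your interpolation argument through $g'(t)$ and Lemma \ref{qz} is simply not available for a general $f$ covered by the statement. Even when $f$ is smooth, your key step runs Lemma \ref{qz} backwards: that lemma says the strict pointwise inequality in (\ref{fc24}) is \emph{sufficient} for strict Schur-convexity (concavity); it does not say that strict Schur-concavity forces $\left(x_1-x_2\right)\left(\frac{\partial f}{\partial x_1}-\frac{\partial f}{\partial x_2}\right)<0$ at every point with $x_1\neq x_2$. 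Indeed it need not: take $f\left(x_1,x_2\right)=-h\left(x_1-x_2\right)$ with $h$ even, $C^1$, strictly increasing on $\left[0,\infty\right)$ but with $h'\left(u_0\right)=0$ at some $u_0>0$; this $f$ is symmetric and strictly Schur-concave, yet the pairwise expression vanishes wherever $x_1-x_2=\pm u_0$, so your claim that $g'(t)>0$ on $\left[0,1\right)$ is unjustified as stated (at best $g'\geq0$ with positivity off an exceptional set, which would need a further argument). The intended proof needs none of this machinery: with the same averaging matrix, $\Theta\neq\Omega$ means $P\Theta=\Omega$ is not a rearrangement of $\Theta$, so the strict (reversed) form of (\ref{fc21}) gives $f(\Omega)=f(P\Theta)>f(\Theta)$ directly, which is precisely how the paper concludes.
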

\begin{proof}
Because that $f$ is Schur-concave, then
\begin{equation*}
f\left(P\mathrm{\Theta}\right)\geq f\left(\mathrm{\Theta}\right),\ \ \ \Theta \in I^n
\end{equation*}
for any doubly stochastic matrix $P$. Then for $\mathrm{\Theta}=\left(\theta_1,\cdots,\theta_n\right)$ in $D_n$, and
\begin{equation*}
P=\left[p_{ij}\right]\ \ with\ \ p_{ij}=\frac{1}{n}, 1\le i,j\le n.
\end{equation*}
It yields
\begin{equation*}
f\left(\mathrm{\Omega}\right)=\ f\left(P\mathrm{\Theta}\right)\geq f\left(\mathrm{\Theta}\right).
\end{equation*}
If $f$ is strictly Schur-concave on $I^n$, then $f\left(\mathrm{\Omega}\right)>f\left(\mathrm{\Theta}\right)$ for all $\mathrm{\Theta}$ in $D_n$, $\mathrm{\Theta}\neq\mathrm{\Omega}$.
\end{proof}

\begin{lem}\emph{(\cite{Zhang X1998Schur-Convex})}\label{qd}
If  $f$: $I^n\rightarrow \textbf{R}$ is a  Schur-convex function, then $f\left(\mathrm{\Omega}\right)$ is a global minimum in $D_n$. If  $f$ is  strictly Schur-convex  on $I^n$, then $f\left(\mathrm{\Omega}\right)$ is  unique global minimum in $D_n$.
\end{lem}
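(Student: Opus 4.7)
The plan is to mirror the argument of Lemma \ref{qccc} essentially verbatim, simply reversing the direction of the inequality coming from the Schur-convex hypothesis. The key observation driving both lemmas is that the uniform doubly stochastic matrix $P=[p_{ij}]$ with $p_{ij}=1/n$ sends any point of $D_n$ to $\mathrm{\Omega}$, because the $i$th coordinate of $P\mathrm{\Theta}$ is $\frac{1}{n}\sum_{j=1}^n \theta_j = \frac{ml}{n} = \sigma$ for every $i$; this is exactly where the definition of $D_n$ (and in particular the sum constraint $\sum\theta_i=ml$) enters.

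First I would invoke the definition of Schur-convexity: for every doubly stochastic $P$ and every $x\in I^n$, $f(Px)\le f(x)$. Applying this with the uniform $P$ above and $x=\mathrm{\Theta}\in D_n\subset I^n$ immediately yields
\begin{equation*}
f(\mathrm{\Omega}) = f(P\mathrm{\Theta}) \le f(\mathrm{\Theta}),
\end{equation*}
so $\mathrm{\Omega}$ is a global minimum on $D_n$. Note that $\mathrm{\Omega}\in D_n$ must be checked, but this follows at once because $\sigma=ml/n\in(0,l)$ under the standing assumption $0<m<n$, so each coordinate of $\mathrm{\Omega}$ lies in $I$ and their sum is $ml$.

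For the strict case, I would argue by contrapositive/uniqueness: if $\mathrm{\Theta}\in D_n$ with $\mathrm{\Theta}\ne \mathrm{\Omega}$, then $\mathrm{\Theta}$ is not a permutation of $\mathrm{\Omega}$ (since $\mathrm{\Omega}$ has all equal coordinates, any permutation of it equals itself). Hence $P\mathrm{\Theta}=\mathrm{\Omega}$ is not obtained from $\mathrm{\Theta}$ by a permutation, and the strict Schur-convexity inequality $f(P\mathrm{\Theta})<f(\mathrm{\Theta})$ applies, giving $f(\mathrm{\Omega})<f(\mathrm{\Theta})$ and hence uniqueness of the minimizer.

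There is essentially no obstacle here: the lemma is the exact dual of Lemma \ref{qccc}, whose proof has already been displayed, and the only subtle point worth spelling out is that $P\mathrm{\Theta}=\mathrm{\Omega}$ on $D_n$, which in turn depends on the fixed-sum constraint built into the definition of $H_n$.
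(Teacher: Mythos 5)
Your proposal is correct and matches the paper's treatment: the paper does not spell out a proof of this lemma (it is cited from Zhang), but its proof of the dual Lemma \ref{qccc} is exactly your argument, applying the uniform doubly stochastic matrix $P$ with $p_{ij}=1/n$ to any $\mathrm{\Theta}\in D_n$ to get $P\mathrm{\Theta}=\mathrm{\Omega}$ and then invoking (strict) Schur-convexity. Your extra remarks that $\mathrm{\Omega}\in D_n$ and that $\mathrm{\Theta}\neq\mathrm{\Omega}$ is never a permutation of $\mathrm{\Omega}$ only make the standard argument more explicit.
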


We obtain the following  analytic inequality.
\begin{theo}\label{qb}
Let  $f\left(\theta\right)$ be a positive, strictly convex function, then
\begin{equation}
\left(\sum_{i=1}^{n}f\left(\theta_i\right)\right)^{2\alpha}-\left(nf\left(\sigma\right)\right)^\alpha\left(\sum_{i=1}^{n}f\left(\theta_i\right)\right)^\alpha\geq\left(f\left(\sigma\right)\right)^\alpha\left[\left(\sum_{i=1}^{n}f\left(\theta_i\right)\right)^\alpha-\left(nf\left(\sigma\right)\right)^\alpha\right],
\end{equation}
where  $\alpha\in \mathbb N^{+}.$ The equality holds when and only when $\theta_1=\theta_2=\cdots=\theta_n=\sigma$.
\end{theo}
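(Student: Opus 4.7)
The plan is to reduce the inequality to a transparent quadratic in $\bigl(\sum_{i}f(\theta_{i})\bigr)^{\alpha}$ and then invoke Schur's criterion for the sum. Abbreviate $S := \sum_{i=1}^{n}f(\theta_{i})$ and $T := f(\sigma)$. On the admissible set $D_{n}$ the arithmetic mean $\sigma = ml/n$ is fixed, so $T$ is a constant while $S$ is the only varying quantity. Multiplying out both sides of the claimed inequality and collecting like powers of $S^{\alpha}$ shows that it is equivalent to the single factored statement
\begin{equation*}
(S^{\alpha}-T^{\alpha})(S^{\alpha}-n^{\alpha}T^{\alpha}) \geq 0.
\end{equation*}

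Next I would prove $S \geq nT$ by Schur-convexity. Consider the symmetric function $F(\Theta) := \sum_{i=1}^{n} f(\theta_{i})$. Its Schur condition \eqref{fc24} reduces to
\begin{equation*}
(\theta_{1}-\theta_{2})\bigl(f'(\theta_{1}) - f'(\theta_{2})\bigr) \geq 0,
\end{equation*}
which holds with strict inequality whenever $\theta_{1}\neq\theta_{2}$, since $f'$ is strictly increasing on $I$ (as $f$ is strictly convex). By Lemma \ref{qz}, $F$ is strictly Schur-convex on $I^{n}$, and Lemma \ref{qd} then gives $S = F(\Theta) \geq F(\Omega) = nT$ on $D_{n}$, with equality precisely at $\Theta = \Omega$.

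Finally, positivity of $f$ on $I$ gives $T>0$, and $n\geq 2$ yields $n^{\alpha}T^{\alpha}>T^{\alpha}$. Combining with the previous step, $S^{\alpha} \geq n^{\alpha}T^{\alpha} > T^{\alpha}$, so both factors in the factorization are nonnegative and the inequality follows. Equality forces $S^{\alpha} = n^{\alpha}T^{\alpha}$, i.e.\ $S = nT$, which by the strict Schur-convexity of $F$ occurs if and only if $\theta_{1} = \cdots = \theta_{n} = \sigma$.

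The one step that requires real attention is the opening algebraic rearrangement; once one recognizes the inequality as a quadratic in $S^{\alpha}$ with roots $T^{\alpha}$ and $n^{\alpha}T^{\alpha}$, the analytic core of the theorem collapses to the Jensen/Schur bound $\sum_{i}f(\theta_{i}) \geq n f(\sigma)$, and no further estimates are needed.
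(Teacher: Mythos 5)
Your proof is correct, and it takes a genuinely different and more economical route than the paper. You observe that the claimed inequality is literally the statement $\left(S^{\alpha}-T^{\alpha}\right)\left(S^{\alpha}-n^{\alpha}T^{\alpha}\right)\geq 0$ with $S=\sum_{i}f(\theta_i)$, $T=f(\sigma)$ (the algebra checks out: expanding gives $S^{2\alpha}-(n^{\alpha}+1)T^{\alpha}S^{\alpha}+n^{\alpha}T^{2\alpha}$, exactly the deficit in the theorem), so everything reduces to the single bound $S\geq nT$, i.e.\ Jensen's inequality, which you obtain from strict Schur-convexity of $\Theta\mapsto\sum_i f(\theta_i)$ via Lemma \ref{qz} and Lemma \ref{qd}; since $n\geq 2$ and $T>0$ the factor $S^{\alpha}-T^{\alpha}\geq (n^{\alpha}-1)T^{\alpha}>0$ is strictly positive, so equality indeed forces $S=nT$ and hence $\theta_1=\cdots=\theta_n=\sigma$, and your equality discussion is complete. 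The paper instead applies the Schur machinery to the whole deficit function $F(\Theta)$: it verifies the Schur condition \eqref{fc24} for $F$ by differentiating, reduces positivity of the resulting bracket to Jensen's inequality \eqref{fc29}, and then invokes Lemma \ref{qd} to conclude $F\geq F(\Omega)=0$. So both arguments ultimately rest on the same Jensen/Schur bound, but your factorization exposes that the theorem is an immediate consequence of it, avoiding the derivative computation and the global-minimum lemma applied to $F$; the paper's longer route has the merit of illustrating the Schur-convexity technique that drives the rest of the article. One small caveat, shared with the paper's own proof: invoking Lemma \ref{qz} presumes $f$ is continuously differentiable (strict convexity alone does not give this), so either state that smoothness hypothesis or derive $S\geq nT$ directly from Jensen's inequality for convex functions, which needs no differentiability.
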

\begin{proof}
Consider the following function
\begin{equation}
F\left(\Theta\right)=\left(\sum_{i=1}^{n}f\left(\theta_i\right)\right)^{2\alpha}-\left(nf\left(\sigma\right)\right)^\alpha\left(\sum_{i=1}^{n}f\left(\theta_i\right)\right)^\alpha
-\left(f\left(\sigma\right)\right)^\alpha\left[\left(\sum_{i=1}^{n}f\left(\theta_i\right)\right)^\alpha-\left(nf\left(\sigma\right)\right)^\alpha\right].
\end{equation}
It shows that $F\left(\Theta\right)$ is  symmetric on $I^n$ and $F\left(\mathrm{\Omega}\right)=0$. In order to prove Theorem \ref{qb}, we need to prove that
$F\left(\Theta\right)\geq 0$ and $F\left(\Theta\right)$  is strictly Schur-convex on $I^n$. Then by Lemma \ref{qd} it implies that $F\left(\mathrm{\Theta}\right)$ has the unique global minimum $F\left(\mathrm{\Omega}\right)$ in $D_n$. So the key of this proof is to determine whether   $F\left(\Theta\right)$ satisfies  strict Schur-convexity
on $I^n$. By Lemma \ref{qz} and (\ref{fc24}), in order to demonstrate that, it is necessary to  verify the following inequality
\begin{equation}\label{fc27}
\left(\theta_1-\theta_2\right)\left(\frac{\partial F}{\partial\theta_1}-\frac{\partial F}{\partial\theta_2}\right)>0,\ \ if\ {\ \theta}_1\neq\theta_2.
\end{equation}
Let $P_n=\sum_{i=1}^{n}f\left(\theta_i\right)$, then
\begin{equation*}
F\left(\Theta\right)=\left(P_n\right)^{2\alpha}-\left(n^\alpha+1\right)\left(f\left(\sigma\right)\right)^\alpha\left(P_n\right)^\alpha+{n^\alpha\left(f\left(\sigma\right)\right)}^{2\alpha}.
\end{equation*}
Therefore
\begin{equation*}
\frac{\partial F}{\partial\theta_i}=2\alpha\left(P_n\right)^{2\alpha-1}f^\prime\left(\theta_i\right)-\alpha\left(n^\alpha+1\right)\left(f\left(\sigma\right)\right)^\alpha\left(P_n\right)^{\alpha-1}f^\prime
\left(\theta_i\right)
\end{equation*}
and
\begin{equation*}
\begin{aligned}
\left(\theta_1-\theta_2\right)\left(\frac{\partial F}{\partial\theta_1}-\frac{\partial F}{\partial\theta_2}\right) =&\left(\theta_1-\theta_2\right)\cdot\left(f^\prime\left(\theta_1\right)-f^\prime\left(\theta_2\right)\right)
\\&\cdot\left[2\alpha\left(P_n\right)^{2\alpha-1}-\alpha\left(n^\alpha+1\right)\left(f\left(\sigma\right)\right)^\alpha\left(P_n\right)^{\alpha-1}\right].
\end{aligned}
\end{equation*}
Since $f$ has strict convexity, then $f^{\prime\prime}>0$ and $f^{\prime}$ is  increasing on $\left(0, l\right).$ This leads to
\begin{equation}\label{fc28}
\left(\theta_1-\theta_2\right)\left(f^\prime\left(\theta_1\right)-f^\prime\left(\theta_2\right)\right)>0.
\end{equation}
Consider
\begin{equation*}
G\left(\mathrm{\Theta}\right)=2\alpha\left(P_n\right)^{2\alpha-1}-\alpha\left(n^\alpha+1\right)\left(f\left(\sigma\right)\right)^\alpha\left(P_n\right)^{\alpha-1}.
\end{equation*}
Because that $f$ is strictly convex and by  Jensen's inequality it follows that
\begin{equation}\label{fc29}
\left(P_n\right)^\alpha>\left(nf\left(\sigma\right)\right)^\alpha.
\end{equation}

\noindent (\ref{fc29}) is strict and  it implies that
\begin{equation*}
\left(P_n\right)^\alpha+\left(P_n\right)^\alpha>\left(nf\left(\sigma\right)\right)^\alpha+\left(f\left(\sigma\right)\right)^\alpha=\left(n^\alpha+1\right)\left(f\left(\sigma\right)\right)^\alpha,
\end{equation*}
\noindent that is
\begin{equation}\label{fc210}
{2\left(P_n\right)}^\alpha>\left(n^\alpha+1\right)\left(f\left(\sigma\right)\right)^\alpha.
\end{equation}

\noindent  (\ref{fc210}) is equivalent to
\begin{equation*}
2\alpha\left(P_n\right)^{2\alpha-1}>\alpha\left(n^\alpha+1\right)\left(f\left(\sigma\right)\right)^\alpha\left(P_n\right)^{\alpha-1}.
\end{equation*}

\noindent Therefore
\begin{equation}\label{fc211}
G\left(\mathrm{\Theta}\right)=2\alpha\left(P_n\right)^{2\alpha-1}-\alpha\left(n^\alpha+1\right)\left(f\left(\sigma\right)\right)^\alpha\left(P_n\right)^{\alpha-1}>0.
\end{equation}

\noindent From (\ref{fc28}) and (\ref{fc211}), (\ref{fc27}) is proved.
So we complete the proof of Theorem  \ref{qb}.
\end{proof}

\begin{remark}
\emph{In the  proof of Theorem \ref{qb}, assume that $f\left(\theta\right)$ is  a positive convex (resp. concave) function, and taking $\alpha=1,$ then (\ref{fc29})  becomes the classical Jensen's inequality: }
\begin{equation}
\sum_{i=1}^{n}f\left(\theta_i\right)\geq \left(\le\right)nf\left(\sigma\right),
\end{equation}
\emph{with equalities   when and only when $\theta_1=\theta_2=\cdots=\theta_n=\sigma$. For example}

\begin{equation*}
 \left(1\right)\ \ \sum_{i=1}^{n}{\tan\theta_i}\geq n\tan{\sigma},\ \ \ \ \ \ \ \ \ \ \ \
\left(2\right) \ \ \sum_{i=1}^{n}{\sin\theta_i}\le n\sin{\sigma},
\end{equation*}
\emph{where $\theta_i\in\left(0,\frac{\pi}{2}\right)$, $i=1,2,\cdots,n$; and~ ~$\sum_{i=1}^{n}\theta_i=\pi$, ~$\sigma=\frac{1}{n}\sum_{i=1}^{n}\theta_i=\frac{\pi}{n}$.}
\end{remark}

In  particular, if $f\left(\theta\right)=\tan{\theta}$ and $f\left(\theta\right)=\sec{\theta}$ for $\theta\in\left(0,\frac{\pi}{2}\right)$, then

\begin{cor}
Let~$\theta_i\in\left(0,\frac{\pi}{2}\right)$, $i=1,2,\cdots,n$; and~ ~$\sum_{i=1}^{n}\theta_i=\pi$, then for $\alpha\in \mathbb N^{+}$
\begin{equation}\label{fc212}
\left(\sum_{i=1}^{n}\tan{\theta_{i}}\right)^{2\alpha}-\left(n\tan{\frac{\pi}{n}}\right)^\alpha\left(\sum_{i=1}^{n}\tan{\theta_{i}}\right)^\alpha\geq\left(\tan{\frac{\pi}{n}}\right)^\alpha\left[\left(\sum_{i=1}^{n}\tan{\theta_{i}}\right)^\alpha-\left(n\tan{\frac{\pi}{n}}\right)^\alpha\right];
\end{equation}
 Specially, if $\alpha=1$, then
\begin{equation}
\left(\sum_{i=1}^{n}\tan{\theta_{i}}\right)^2-\left(n\tan{\frac{\pi}{n}}\right)\left(\sum_{i=1}^{n}\tan{\theta_{i}}\right)\geq \tan{\frac{\pi}{n}}\left(\sum_{i=1}^{n}\tan{\theta_{i}}-n\tan{\frac{\pi}{n}}\right),
\end{equation}
both equalities hold  when and only when  $\theta_1=\theta_2=\cdots=\theta_n=\frac{\pi}{n}$.
\end{cor}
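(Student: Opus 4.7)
The plan is to obtain this corollary as a direct specialization of Theorem \ref{qb} to the function $f(\theta)=\tan\theta$ on the interval $\left(0,\frac{\pi}{2}\right)$. First I would verify that $f$ meets the hypotheses of Theorem \ref{qb}: on $\left(0,\frac{\pi}{2}\right)$ we have $\tan\theta>0$, and a direct differentiation gives $f''(\theta)=2\sec^{2}\theta\tan\theta>0$, so $\tan\theta$ is positive and strictly convex on this interval. Hence the function $f(\theta)=\tan\theta$ falls exactly under the setting in which Theorem \ref{qb} was proved.

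Next I would match up the parameters. Since $\theta_{1},\dots,\theta_{n}\in\left(0,\frac{\pi}{2}\right)$ satisfy $\sum_{i=1}^{n}\theta_{i}=\pi$, the mean is
\[
\sigma=\frac{1}{n}\sum_{i=1}^{n}\theta_{i}=\frac{\pi}{n},
\]
so that $f(\sigma)=\tan\frac{\pi}{n}$. Plugging $f(\theta_{i})=\tan\theta_{i}$ and $f(\sigma)=\tan\frac{\pi}{n}$ into the inequality of Theorem \ref{qb} yields inequality (\ref{fc212}) for every $\alpha\in\mathbb{N}^{+}$, and setting $\alpha=1$ produces the second displayed inequality of the corollary. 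The equality clause in Theorem \ref{qb} transfers immediately: equality holds iff $\theta_{1}=\cdots=\theta_{n}=\sigma=\frac{\pi}{n}$.

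There is essentially no hard step here; the entire proof is a substitution, and the only thing that really needs to be checked is the strict convexity of $\tan$, which is elementary. If one wanted to be completely explicit, one could remark that the condition $\sum\theta_{i}=\pi$ places $\Theta=(\theta_{1},\dots,\theta_{n})$ in the set $D_{n}$ corresponding to $I=\left(0,\frac{\pi}{2}\right)$ and $ml=\pi$ from the earlier notation, so that Lemma \ref{qd} (invoked inside the proof of Theorem \ref{qb}) applies verbatim. I would write the proof as a one- or two-line specialization, rather than reproducing the Schur-convexity argument, since the general statement has already done all of the work.
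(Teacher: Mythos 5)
Your proposal is correct and matches the paper's (implicit) argument exactly: the paper also obtains this corollary by specializing Theorem \ref{qb} to $f(\theta)=\tan\theta$ on $\left(0,\frac{\pi}{2}\right)$ with $\sigma=\frac{\pi}{n}$, and your check that $f''(\theta)=2\sec^{2}\theta\tan\theta>0$ supplies the positivity and strict convexity hypotheses. Nothing further is needed.
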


\begin{cor}
Let~$\theta_i\in\left(0,\frac{\pi}{2}\right)$, $i=1,2,\cdots,n$; and ~$\sum_{i=1}^{n}\theta_i=\pi$, then
\begin{equation}
\left(\sum_{i=1}^{n}\sec{\theta_{i}}\right)^{2\alpha}-\left(n\sec{\frac{\pi}{n}}\right)^\alpha\left(\sum_{i=1}^{n}\sec{\theta_{i}}\right)^\alpha\geq\left(\sec{\frac{\pi}{n}}\right)^\alpha\left[\left(\sum_{i=1}^{n}\sec{\theta_{i}}\right)^\alpha-\left(n\sec{\frac{\pi}{n}}\right)^\alpha\right],
\end{equation}
where  $\alpha\in \mathbb N^{+}.$ The equality holds when and only when  $\theta_1=\theta_2=\cdots=\theta_n=\frac{\pi}{n}$.
\end{cor}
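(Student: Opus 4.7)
The plan is to recognize this corollary as a direct specialization of Theorem \ref{qb} with the explicit choice $f(\theta) = \sec\theta$ on the interval $I = (0,\pi/2)$. The constraint $\sum_{i=1}^n \theta_i = \pi$ matches the setup of the theorem with $\sigma = \frac{1}{n}\sum_{i=1}^n \theta_i = \pi/n$, which lies in $(0,\pi/2)$ whenever $n \ge 3$ (the only geometrically meaningful range). So the task reduces to verifying that $f(\theta) = \sec\theta$ meets the two hypotheses of Theorem \ref{qb}: positivity and strict convexity on $I$.

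First I would check positivity: on $(0,\pi/2)$ we have $\cos\theta > 0$, so $\sec\theta = 1/\cos\theta > 0$. Next, to check strict convexity, I would compute
\begin{equation*}
f'(\theta) = \sec\theta\tan\theta, \qquad f''(\theta) = \sec\theta\tan^2\theta + \sec^3\theta = \sec\theta(\tan^2\theta + \sec^2\theta).
\end{equation*}
Since each factor is strictly positive on $(0,\pi/2)$, we obtain $f''(\theta) > 0$ there, so $\sec\theta$ is strictly convex on $I$.

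With both hypotheses verified, I would then invoke Theorem \ref{qb} directly. Substituting $f(\theta_i) = \sec\theta_i$ and $f(\sigma) = \sec(\pi/n)$ into the conclusion of Theorem \ref{qb} yields
\begin{equation*}
\left(\sum_{i=1}^{n}\sec\theta_i\right)^{2\alpha} - \bigl(n\sec\tfrac{\pi}{n}\bigr)^\alpha\left(\sum_{i=1}^{n}\sec\theta_i\right)^\alpha \ge \bigl(\sec\tfrac{\pi}{n}\bigr)^\alpha\left[\left(\sum_{i=1}^{n}\sec\theta_i\right)^\alpha - \bigl(n\sec\tfrac{\pi}{n}\bigr)^\alpha\right],
\end{equation*}
which is precisely the stated inequality. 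The equality condition in Theorem \ref{qb} (namely $\theta_1 = \cdots = \theta_n = \sigma$) translates immediately to $\theta_1 = \cdots = \theta_n = \pi/n$.

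There is essentially no main obstacle: the proof is a direct substitution once the convexity of $\sec\theta$ is recorded. The only mild point to be careful about is ensuring that $\sigma = \pi/n$ lies in the domain $(0,\pi/2)$ so that Theorem \ref{qb} applies (this holds for all $n \ge 3$, which covers all convex polygon cases of interest) and that the strict convexity — not merely convexity — holds so that the equality characterization carries over.
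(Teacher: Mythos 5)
Your proposal is correct and follows exactly the paper's route: the corollary is obtained by specializing Theorem \ref{qb} to $f(\theta)=\sec\theta$ on $\left(0,\frac{\pi}{2}\right)$, which the paper justifies implicitly and you justify explicitly by checking positivity and strict convexity via $f''(\theta)=\sec\theta\left(\tan^2\theta+\sec^2\theta\right)>0$. Your added care about $\sigma=\frac{\pi}{n}\in\left(0,\frac{\pi}{2}\right)$ and the equality case is a harmless refinement of the same argument.
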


Notice that $f\left(x\right)=x^2$ on $\left(0,1\right)$ has strict convexity, then
\begin{cor}
 Let $\sum_{i=1}^{n}x_i=m$, $i=1,2,\cdots,n$, then
\begin{equation}
\left(\sum_{i=1}^{n}x_i^2\right)^{2\alpha}-\left(\frac{m^2}{n}\right)^\alpha\left(\sum_{i=1}^{n}x_i^2\right)^\alpha\geq\left(\frac{m^2}{n^2}\right)^\alpha\left[\left(\sum_{i=1}^{n}x_i^2\right)^\alpha-\left(\frac{m^2}{n}\right)^\alpha\right].
\end{equation}

\noindent for $\alpha\in \mathbb N^{+}.$ The equality holds when and only when $x_1=x_2=\cdots=x_n=\frac{m}{n}$ $\left(0<m<n\right)$.
\end{cor}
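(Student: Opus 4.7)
The plan is to apply Theorem \ref{qb} directly with the choice $f(\theta)=\theta^{2}$ on the interval $I=(0,1)$, since this corollary is explicitly introduced by the remark that $f(x)=x^{2}$ is strictly convex on $(0,1)$. First I would verify the hypotheses: the function $f(\theta)=\theta^{2}$ is positive on $(0,1)$ and $f''(\theta)=2>0$, so $f$ is strictly convex, hence satisfies the assumption of Theorem \ref{qb}.

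Next I would align the notation with the definitions preceding Lemma \ref{qccc}. Taking $l=1$ and setting $\theta_i=x_i$, the constraint $\sum_{i=1}^{n}x_{i}=m$ (with $0<m<n$) puts the vector $(x_{1},\dots,x_{n})$ into the set $D_{n}=I^{n}\cap H_{n}$, and yields
\begin{equation*}
\sigma=\frac{1}{n}\sum_{i=1}^{n}x_{i}=\frac{m}{n},\qquad f(\sigma)=\frac{m^{2}}{n^{2}},\qquad nf(\sigma)=\frac{m^{2}}{n}.
\end{equation*}
Substituting $f(\theta_i)=x_i^{2}$ and these values of $f(\sigma)$ and $nf(\sigma)$ into the inequality
\begin{equation*}
\Bigl(\sum_{i=1}^{n}f(\theta_{i})\Bigr)^{2\alpha}-(nf(\sigma))^{\alpha}\Bigl(\sum_{i=1}^{n}f(\theta_{i})\Bigr)^{\alpha}\geq (f(\sigma))^{\alpha}\Bigl[\Bigl(\sum_{i=1}^{n}f(\theta_{i})\Bigr)^{\alpha}-(nf(\sigma))^{\alpha}\Bigr]
\end{equation*}
of Theorem \ref{qb} reproduces the claim of the corollary verbatim.

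Finally, for the equality case, Theorem \ref{qb} (which rests on the strict Schur-convexity of the auxiliary function $F$ together with Lemma \ref{qd}) asserts that equality holds if and only if $\theta_{1}=\cdots=\theta_{n}=\sigma$; in our substitution this is precisely $x_{1}=\cdots=x_{n}=m/n$, as stated. Since the entire argument is a direct specialization, there is no real obstacle — the only point worth being careful about is checking that the constraint $0<m<n$ ensures $\sigma=m/n\in(0,1)$, so that the common value permitted by the equality case genuinely lies in the domain $I$ on which the strict convexity of $f(x)=x^{2}$ was used.
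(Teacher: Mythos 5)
Your proposal is correct and matches the paper's approach: the corollary is obtained exactly as you describe, by specializing Theorem \ref{qb} to the strictly convex function $f(\theta)=\theta^{2}$ on $(0,1)$ with $\sigma=m/n$, so that $f(\sigma)=m^{2}/n^{2}$ and $nf(\sigma)=m^{2}/n$. Your added check that $0<m<n$ keeps $\sigma$ inside the domain $I$ is a sensible detail the paper leaves implicit.
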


\begin{theo}\label{2}
Let $f\left(\theta\right)$ be a positive, strictly convex function, then
\begin{equation}
\left(\sum_{i=1}^{n}f\left(\theta_i\right)\right)^{2\alpha}-\left(nf\left(\sigma\right)\right)^\alpha\left(\sum_{i=1}^{n}f\left(\theta_i\right)\right)^\alpha\le\left(\sum_{i=1}^{n}f\left(\theta_i\right)\right)^{k\alpha}-\left(nf\left(\sigma\right)\right)^{k\alpha},
\end{equation}
where $\alpha, k\in\mathbb N^{+}$ and $k\geq2$. The equality holds when and only when $\theta_1=\theta_2=\cdots=\theta_n=\sigma$.
\end{theo}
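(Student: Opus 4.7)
The plan is to mirror the Schur-convexity strategy from the proof of Theorem \ref{qb}. Writing $P_n=\sum_{i=1}^{n} f(\theta_i)$ and $Q=nf(\sigma)$, I would set
\[ F(\Theta) := P_n^{k\alpha} - Q^{k\alpha} - P_n^{2\alpha} + Q^{\alpha}P_n^{\alpha}, \]
so that the claim becomes $F(\Theta)\ge 0$ on $D_n$. Since $\sigma$ is constant on $D_n$, so is $Q$; the function $F$ is manifestly symmetric in $\theta_1,\ldots,\theta_n$, and at $\Theta=\Omega$ one has $P_n=Q$, so every summand cancels and $F(\Omega)=0$. By Lemma \ref{qd}, it then suffices to show that $F$ is strictly Schur-convex on $I^n$; the desired inequality together with its equality case will follow.

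To apply the reduced Schur condition (\ref{fc24}), I would compute
\[ \frac{\partial F}{\partial \theta_i} = \alpha\, f'(\theta_i)\,\Bigl[\,k P_n^{k\alpha-1} - 2 P_n^{2\alpha-1} + Q^{\alpha}P_n^{\alpha-1}\,\Bigr], \]
observe that the bracketed expression, call it $H(\Theta)$, does not depend on the index $i$, and deduce
\[ (\theta_1-\theta_2)\!\left(\frac{\partial F}{\partial \theta_1}-\frac{\partial F}{\partial \theta_2}\right) = \alpha\,(\theta_1-\theta_2)\bigl(f'(\theta_1)-f'(\theta_2)\bigr)\cdot H(\Theta). \]
Exactly as in (\ref{fc28}), strict convexity of $f$ makes the first product strictly positive whenever $\theta_1\ne\theta_2$, so the whole matter reduces to showing $H(\Theta)>0$ on $D_n$.

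Pulling out the positive factor $P_n^{\alpha-1}$ from $H$, the estimate to verify becomes
\[ kP_n^{(k-1)\alpha} - 2P_n^{\alpha} + Q^{\alpha} > 0. \]
For $k=2$ this collapses at once to $Q^{\alpha}>0$, which is immediate from positivity of $f$. For $k\ge 3$ the natural route is to combine the Jensen bound $P_n\ge Q$ (inequality (\ref{fc29})) with the hypothesis $k\ge 2$: one writes $kP_n^{(k-1)\alpha}\ge 2P_n^{(k-1)\alpha}\ge 2P_n^{\alpha}$, where the last step uses $P_n^{(k-2)\alpha}\ge 1$, leaving a residual $Q^{\alpha}>0$ that secures the strict inequality.

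I expect the main obstacle to be precisely this last monotonicity step. Positivity and strict convexity of $f$ alone do not force $P_n\ge 1$, so the bound $P_n^{(k-2)\alpha}\ge 1$ needs an ambient hypothesis to be rigorously justified. In the geometric setting of the later sections, where $f=\tan$ or $\sec$ on $(0,\pi/2)$ with $\sigma=\pi/n$ and $n\ge 3$, one has $Q=nf(\sigma)\ge\pi>1$; Jensen's inequality then yields $P_n\ge Q>1$, and the estimate closes. Handling this input cleanly — and checking that no additional assumption is smuggled in for the stated generality — is the part of the write-up that demands care.
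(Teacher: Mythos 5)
Your strategy is the same as the paper's: the paper works with the negative of your $F$ (its (\ref{fc217})), proves strict Schur-concavity via the reduced condition (\ref{fc24}) and Lemma \ref{qccc}, which is exactly your strict Schur-convexity argument with Lemma \ref{qd}, and the derivative computation and the reduction to the sign of a bracket agree with (\ref{fc219}). The one place where you stall --- establishing $kP_n^{(k-1)\alpha}-2P_n^{\alpha}+Q^{\alpha}>0$ for $k\ge 3$ --- is precisely where the paper has a gap of its own: the corresponding inequality (\ref{fc220}) is simply asserted there, with no argument, and it does not follow from positivity and strict convexity of $f$ alone. Indeed, in the stated generality the theorem is false: for $\alpha=1$, $k=3$ the claim is equivalent, off the diagonal where $P_n>Q:=nf(\sigma)$ by the strict Jensen inequality (\ref{fc29}), to $P_n\le P_n^2+P_nQ+Q^2$, and since $Q<P_n$ the right-hand side is less than $3P_n^2$, so the inequality fails whenever $P_n<\tfrac13$ (take for instance $f(\theta)=\varepsilon+(\theta-\sigma)^2$ with $\varepsilon$ small and $\Theta$ near but not equal to $\Omega$). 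So your concern about needing $P_n^{(k-2)\alpha}\ge 1$ is exactly right, and it is not a defect of your write-up relative to the paper's proof; it exposes the unproved step in the paper.

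The repair is the one you sketch: add a normalization such as $nf(\sigma)\ge 1$ (hence $P_n\ge Q\ge 1$ by Jensen), after which $kP_n^{(k-1)\alpha}\ge 2P_n^{(k-1)\alpha}\ge 2P_n^{\alpha}$ for $k\ge 2$ and the strict positivity follows from $Q^{\alpha}>0$; your $k=2$ case needs no extra hypothesis at all. This normalization is harmless for every application in the paper, where $f=\tan$ or $f=\csc$ on $\left(0,\tfrac{\pi}{2}\right)$ with $\sum_{i=1}^{n}\theta_i=\pi$, since $n\tan\frac{\pi}{n}>\pi>1$ and $n\csc\frac{\pi}{n}\ge n>1$. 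With that hypothesis stated explicitly, your argument is complete and coincides with (and is more careful than) the paper's proof of Theorem \ref{2}.
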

\begin{proof}

Consider the following function
\begin{equation}\label{fc217}
F\left(\Theta\right)=\left(\sum_{i=1}^{n}f\left(\theta_i\right)\right)^{2\alpha}-\left(nf\left(\sigma\right)\right)^\alpha\left(\sum_{i=1}^{n}f\left(\theta_i\right)\right)^\alpha-\left.\left(\sum_{i=1}^{n}f\left(\theta_i\right)\right)^{k\alpha}+\left(nf\left(\sigma\right)\right)^{k\alpha}.\right.
\end{equation}
It is obviously that $F\left(\Theta\right)$ is  symmetrical  on $I^n$ and $F\left(\mathrm{\Omega}\right)=0$. Lemma \ref{qccc} implies that $F\left(\mathrm{\Omega}\right)$ is the unique global maximum in $D_n$ when  $F\left(\Theta\right)$ is  strictly Schur-concave. So the key is to determine whether $F\left(\Theta\right)$ is  strictly Schur-concave  on $I^n$. By Lemma 1 and (\ref{fc24}), in order to demonstrate that, it is necessary to verify the following inequality
\begin{equation}\label{fc218}
\left(\theta_1-\theta_2\right)\left(\frac{\partial F}{\partial\theta_1}-\frac{\partial F}{\partial\theta_2}\right)<0,\ \ if\ {\ \theta}_1\neq\theta_2.
\end{equation}
Let $P_n=\sum_{i=1}^{n}f\left(\theta_i\right)$, then
\begin{equation*}
F\left(\Theta\right)=\left(P_n\right)^{2\alpha}-\left(nf\left(\sigma\right)\right)^\alpha\left(P_n\right)^\alpha-\left(P_n\right)^{k\alpha}+\left(nf\left(\sigma\right)\right)^{k\alpha}.
\end{equation*}
Therefore

\begin{equation*}
\begin{aligned}
\frac{\partial F}{\partial\theta_i}&=2\alpha\left(P_n\right)^{2\alpha-1}f^\prime\left(\theta_i\right)-n\left(f\left(\sigma\right)\right)^a\alpha\left(P_n\right)^{\alpha-1}f^\prime\left(\theta_i\right)-k\alpha\left(P_n\right)^{k\alpha-1}f^\prime\left(\theta_i\right)
\\&=f^\prime\left(\theta_i\right)\left[2\alpha\left(P_n\right)^{2\alpha-1}-n\left(f\left(\sigma\right)\right)^a\alpha\left(P_n\right)^{\alpha-1}-k\alpha\left(P_n\right)^{k\alpha-1}\right]
\end{aligned}
\end{equation*}
and
\begin{equation}\label{fc219}
\begin{aligned}
\left(\theta_1-\theta_2\right)\left(\frac{\partial f}{\partial\theta_1}-\frac{\partial f}{\partial\theta_2}\right)=&\left(\theta_1-\theta_2\right)\cdot\left(f^\prime\left(\theta_1\right)-f^\prime\left(\theta_2\right)\right)
\\ &\cdot\left[2\alpha\left(P_n\right)^{2\alpha-1}-n\left(f\left(\sigma\right)\right)^a\alpha\left(P_n\right)^{\alpha-1}-k\alpha\left(P_n\right)^{k\alpha-1}\right].
\end{aligned}
\end{equation}
Since $f$ is strictly convex, then $f^{\prime\prime}>0$ and $f^{'}$ is increasing. It leads to
\begin{equation*}
\left(\theta_1-\theta_2\right)\left(f^\prime\left(\theta_1\right)-f^\prime\left(\theta_2\right)\right)>0,
\end{equation*}
and 
\begin{equation}\label{fc220}
2\alpha\left(P_n\right)^{2\alpha-1}-n\left(f\left(\sigma\right)\right)^a\alpha\left(P_n\right)^{\alpha-1}-k\alpha\left(P_n\right)^{k\alpha-1}<0.
\end{equation}
From \ (\ref{fc219}) and (\ref{fc220}), (\ref{fc218}) is obtained. So we complete the proof of Theorem \ref{2}.
\end{proof}

In particular, assume that
$f\left(\theta\right)=\tan{\theta}$ and $f\left(\theta\right)=\csc{\theta}$ for $\theta\in\left(0,\frac{\pi}{2}\right)$, then we have
\begin{cor}
Let~$\theta_i\in\left(0,\frac{\pi}{2}\right)$, $i=1,2,\cdots,n;$ and $\sum_{i=1}^{n}\theta_i=\pi$, then
\begin{equation}\label{fc221}
\left(\sum_{i=1}^{n}\tan{\theta_{i}}\right)^{2\alpha}-\left(n\tan{\frac{\pi}{n}}\right)^\alpha\left(\sum_{i=1}^{n}\tan{\theta_{i}}\right)^\alpha\le\left(\sum_{i=1}^{n}\tan{\theta_{i}}\right)^{k\alpha}-\left(n\tan{\frac{\pi}{n}}\right)^{k\alpha},
\end{equation}
where  $\alpha, k\in \mathbb N^{+}$ and  $k\ge2$. Especially, if $\alpha=1$ and $k=3$, then
\begin{equation}
\left(\sum_{i=1}^{n}\tan{\theta_{i}}\right)^2-\left(n\tan{\frac{\pi}{n}}\right)\left(\sum_{i=1}^{n}\tan{\theta_{i}}\right)\le\left.\left(\sum_{i=1}^{n}\tan{\theta_{i}}\right)^3-\left(n\tan{\frac{\pi}{n}}\right)^3,\right.
\end{equation}
both equalities hold when and only when  $\theta_1=\theta_2=\cdots=\theta_n=\frac{\pi}{n}$.
\end{cor}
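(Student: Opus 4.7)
The plan is to deduce this corollary directly from Theorem \ref{2} by specializing $f$ to the tangent function, so the work reduces to (i) checking that $\tan\theta$ satisfies the hypotheses of Theorem \ref{2} on the relevant interval, and (ii) identifying what $\sigma$ and $nf(\sigma)$ become under the constraint $\sum_{i=1}^{n}\theta_i=\pi$. First I would take $I=(0,\pi/2)$ and $f(\theta)=\tan\theta$. Positivity is immediate on $I$, and strict convexity follows from $f''(\theta)=2\sec^2\theta\,\tan\theta>0$ on $I$, so $f'=\sec^2$ is strictly increasing there. Thus $f$ meets the hypotheses of Theorem \ref{2}.

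Next I would observe that the constraint $\sum_{i=1}^{n}\theta_{i}=\pi$ fits the framework of the excerpt with $ml=\pi$, giving $\sigma=\frac{1}{n}\sum_{i=1}^{n}\theta_i=\frac{\pi}{n}$ and hence $f(\sigma)=\tan(\pi/n)$ and $nf(\sigma)=n\tan(\pi/n)$. One should note in passing that since $\theta_i\in(0,\pi/2)$ and $\sum\theta_i=\pi$, we automatically have $n\ge 3$, so $\pi/n\in(0,\pi/2)$ and $\tan(\pi/n)$ is well-defined and positive. Plugging these identifications into the conclusion of Theorem \ref{2} yields (\ref{fc221}) verbatim, and the equality condition $\theta_1=\cdots=\theta_n=\sigma$ becomes $\theta_1=\cdots=\theta_n=\pi/n$ exactly as claimed.

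The displayed special case is then obtained by simply setting $\alpha=1$ and $k=3$ in (\ref{fc221}), so no separate argument is needed. Consequently there is essentially no obstacle here: all the Schur-concavity and Jensen-type work has already been absorbed into Theorem \ref{2}, and the role of this corollary is only to record the geometrically relevant instantiation $f=\tan$ that will be used in the subsequent Bonnesen-style inequalities for convex polygons. If one also wishes to derive the analogous statement for $f(\theta)=\csc\theta$ mentioned in the preamble of the corollary, the same argument applies once one verifies $(\csc\theta)''=\csc\theta(2\csc^2\theta-1)>0$ on $(0,\pi/2)$.
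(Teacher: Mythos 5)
Your proposal is correct and matches the paper's route exactly: the corollary is obtained by specializing Theorem \ref{2} to $f(\theta)=\tan\theta$ on $I=(0,\pi/2)$ with $\sigma=\pi/n$, so that $nf(\sigma)=n\tan\frac{\pi}{n}$, and the displayed case is just $\alpha=1$, $k=3$. Your added checks (positivity and strict convexity of $\tan$, and the observation that $n\ge 3$ ensures $\pi/n\in(0,\pi/2)$) are details the paper leaves implicit, but the argument is the same.
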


\begin{cor}
Let $f\left(\theta\right)=\csc{\theta}$,~$\theta_i\in\left(0,\frac{\pi}{2}\right)$, $i=1,2,\cdots,n$; and $\sum_{i=1}^{n}\theta_i=\pi$, then
\begin{equation}
\left(\sum_{i=1}^{n}\csc{\theta_{i}}\right)^{2\alpha}-\left(n\csc{\frac{\pi}{n}}\right)^\alpha\left(\sum_{i=1}^{n}\csc{\theta_{i }}\right)^\alpha\le\left(\sum_{i=1}^{n}\csc{\theta_{i}}\right)^{k\alpha}-\left(n\csc{\frac{\pi}{n}}\right)^{k\alpha},
\end{equation}
where $\alpha, k\in \mathbb N^{+}$ and $k\ge 2$. Especially, when $\alpha=1$ and $k=3$, then
\begin{equation}
\left(\sum_{i=1}^{n}\csc{\theta_{i}}\right)^{2}-\left(n\csc{\frac{\pi}{n}}\right)\left(\sum_{i=1}^{n}\csc{\theta_{i }}\right)\le\left(\sum_{i=1}^{n}\csc{\theta_{i}}\right)^{3}-\left(n\csc{\frac{\pi}{n}}\right)^{3},
\end{equation}
both equalities hold when and only when  $\theta_1=\theta_2=\cdots=\theta_n=\frac{\pi}{n}$.
\end{cor}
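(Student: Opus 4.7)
The plan is to obtain this corollary as an immediate specialization of Theorem \ref{2} with the choice $f(\theta)=\csc\theta$, so the work reduces to a hypothesis check on the interval $I=(0,\pi/2)$ together with an identification of the normalization constants.

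First I would verify that $\csc\theta$ meets the requirements placed on $f$ in Theorem \ref{2}. Positivity is immediate, since $\csc\theta=1/\sin\theta>0$ for $\theta\in(0,\pi/2)$. For strict convexity I would compute
\begin{equation*}
(\csc\theta)'' \;=\; \frac{1+\cos^{2}\theta}{\sin^{3}\theta},
\end{equation*}
which is strictly positive on $(0,\pi/2)$ (in fact on the whole of $(0,\pi)$), so $\csc\theta$ is strictly convex there. Next I would record that under the hypothesis $\sum_{i=1}^{n}\theta_i=\pi$ the arithmetic mean is $\sigma=\pi/n$, hence $f(\sigma)=\csc(\pi/n)$ and $nf(\sigma)=n\csc(\pi/n)$. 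These are precisely the quantities appearing in the right- and left-hand sides of the claimed inequality.

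With these checks in place I would substitute $f=\csc$ into the conclusion of Theorem \ref{2}. Doing so converts
\begin{equation*}
\Bigl(\sum_{i=1}^{n}f(\theta_i)\Bigr)^{2\alpha}-(nf(\sigma))^{\alpha}\Bigl(\sum_{i=1}^{n}f(\theta_i)\Bigr)^{\alpha}\le \Bigl(\sum_{i=1}^{n}f(\theta_i)\Bigr)^{k\alpha}-(nf(\sigma))^{k\alpha}
\end{equation*}
into the stated inequality for $\csc$. The special case $\alpha=1$, $k=3$ is then just a literal substitution of these exponents. The equality clause $\theta_1=\cdots=\theta_n=\pi/n$ transfers verbatim from Theorem \ref{2} because $\sigma=\pi/n$ under the present constraint.

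There is essentially no obstacle beyond the convexity computation; the result is a direct corollary, entirely parallel to the previous $\csc$-style corollaries and to the $\tan\theta$ case already treated. The only minor point worth noting is that one must confirm the domain $(0,\pi/2)$ is compatible with the constraint $\sum\theta_i=\pi$ (which requires $n\ge 3$, implicit in the discrete polygon setting of the paper), so that all $\theta_i$ lie inside the strict convexity region of $\csc$.
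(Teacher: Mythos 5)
Your proposal is correct and follows exactly the route the paper intends: the corollary is obtained by specializing Theorem \ref{2} to $f(\theta)=\csc\theta$ on $(0,\pi/2)$ with $\sigma=\pi/n$, and your explicit verification that $(\csc\theta)''=\frac{1+\cos^{2}\theta}{\sin^{3}\theta}>0$ supplies the positivity and strict convexity hypotheses that the paper leaves implicit. Nothing further is needed.
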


\section{Discrete Bonnesen-style  isoperimetric inequalities}
In  this section, by using the  analytic isoperimetric inequalities in Theorem \ref{qb}, we establish some  discrete Bonnesen isoperimetric inequalities.

\begin{theo}\label{3}
Assume that ${\Gamma}_n$ is an   $n$-sided planar  convex polygon circumscribed in a circle of radius $R$. Denote by  $L_n$ and  $A_n$  the perimeter and area  of
${\Gamma}_n$, then
\begin{equation}\label{fc31}
L_n^{2\alpha}-4^\alpha\left(d_nA_n\right)^\alpha\geq 2^\alpha R^\alpha\left(\tan{\frac{\pi}{n}}\right)^\alpha\left[L_n^\alpha-\left(L_n^\ast\right)^\alpha\right],\ \ \ d_n=ntan{\frac{\pi}{n}},
\end{equation}
\begin{equation}\label{fc32}
\left(\frac{A_n}{R^2}\right)^{2\alpha}-d_n^\alpha\left(\frac{L_n}{2R}\right)^\alpha\geq\left(\tan{\frac{\pi}{n}}\right)^\alpha\left[\left(\frac{L_n}{2R}\right)^\alpha-\left(\frac{L_n^\ast}{2R}\right)^\alpha\right],\ \ \ d_n=ntan{\frac{\pi}{n}},
\end{equation}
where $\alpha\in\mathbb N^{+}$, $L_n^\ast$ is the perimeter of the regular  $n$-sided convex polygon circumscribed in the same circle with ${\Gamma}_n$.~Both equalities hold when and only when ${\Gamma}_n$ is a regular polygon.
\end{theo}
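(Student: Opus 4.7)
The plan is to parametrize $\Gamma_n$ by the central angles at its incircle so that $L_n$, $A_n$, and $L_n^{\ast}$ become symmetric functions of $\{\tan\theta_i\}$, after which both inequalities reduce directly to the analytic inequality of Theorem \ref{qb} applied to $f(\theta)=\tan\theta$.

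First I would set up the geometric parametrization. Let $O$ denote the center of the circle of radius $R$ inscribed in $\Gamma_n$, and let $2\theta_i$ be the angle at $O$ spanned by the $i$th side; convexity of $\Gamma_n$ forces $\theta_i\in(0,\pi/2)$, and the central angles must sum to $2\pi$, so $\sum_{i=1}^n\theta_i=\pi$. By elementary trigonometry each side has length $2R\tan\theta_i$ and bounds, together with $O$, a triangle of area $R^2\tan\theta_i$; summing gives
\begin{equation*}
L_n=2R\sum_{i=1}^n\tan\theta_i,\qquad A_n=R^2\sum_{i=1}^n\tan\theta_i=\tfrac{1}{2}RL_n.
\end{equation*}
The regular case $\theta_i\equiv\pi/n$ then yields $L_n^{\ast}=2nR\tan(\pi/n)=2Rd_n$.

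Next I would apply Theorem \ref{qb}. Since $f(\theta)=\tan\theta$ is positive and strictly convex on $(0,\pi/2)$ and $\sigma=\pi/n$ is the mean of $\{\theta_i\}$, writing $P_n:=\sum_{i=1}^n\tan\theta_i$ the theorem gives
\begin{equation*}
P_n^{2\alpha}-\bigl(n\tan(\pi/n)\bigr)^\alpha P_n^\alpha\ \geq\ \tan^\alpha(\pi/n)\bigl[P_n^\alpha-\bigl(n\tan(\pi/n)\bigr)^\alpha\bigr],
\end{equation*}
with equality iff all $\theta_i=\pi/n$. Using the identifications $L_n/(2R)=A_n/R^2=P_n$, $L_n^{\ast}/(2R)=n\tan(\pi/n)$, and $d_n=n\tan(\pi/n)$, this inequality is precisely (\ref{fc32}). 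Multiplying through by $(2R)^{2\alpha}$ and rewriting $(2R)^{2\alpha}\bigl(n\tan(\pi/n)\bigr)^\alpha P_n^\alpha=4^\alpha(d_nA_n)^\alpha$, together with $(2R\tan(\pi/n))^\alpha\cdot(2R)^\alpha=2^\alpha R^\alpha\tan^\alpha(\pi/n)\cdot(2R)^\alpha$, converts it into (\ref{fc31}). The equality condition $\theta_1=\cdots=\theta_n=\pi/n$ inherited from Theorem \ref{qb} translates to $\Gamma_n$ being regular.

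There is no genuine obstacle here: all of the analytic heavy lifting was done in Theorem \ref{qb}. The only care required is the trigonometric bookkeeping for tangential polygons, in particular the identity $A_n=\tfrac{1}{2}RL_n$, and the clean translation of the inequality for $\{\tan\theta_i\}$ into the geometric variables $L_n$, $A_n$ and $L_n^{\ast}$; once these dictionary entries are in place, (\ref{fc32}) is literally Theorem \ref{qb} and (\ref{fc31}) is its rescaling.
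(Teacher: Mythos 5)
Your overall route is exactly the paper's: substitute the tangential-polygon identities $L_n=2R\sum_{i=1}^n\tan\theta_i$, $A_n=R^2\sum_{i=1}^n\tan\theta_i$, $\sum_{i=1}^n\theta_i=\pi$, $L_n^\ast=2nR\tan\frac{\pi}{n}$ into Theorem \ref{qb} (equivalently inequality (\ref{fc212})) with $f(\theta)=\tan\theta$ and $\sigma=\frac{\pi}{n}$. Your identification $L_n/(2R)=A_n/R^2=\sum_i\tan\theta_i$ gives (\ref{fc32}) directly, your multiplication by $(2R)^{2\alpha}$ gives (\ref{fc31}), and the equality case transfers correctly; this is the same computation the paper performs.

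There is, however, a flaw in the geometric justification of the parametrization as you state it. You let $2\theta_i$ be the angle at the incenter $O$ subtended by the $i$th \emph{side} and claim that this side has length $2R\tan\theta_i$ and bounds with $O$ a triangle of area $R^2\tan\theta_i$. That holds only when the incircle touches the side at its midpoint. In general the tangent point splits the central angle over the side as $2\theta_i=\beta_i+\gamma_i$ with $\beta_i\neq\gamma_i$, and then $a_i=R(\tan\beta_i+\tan\gamma_i)\geq 2R\tan\theta_i$, with strict inequality generically (try the $3$--$4$--$5$ right triangle with $R=1$: the side of length $4$ subtends a half-angle $\theta$ with $2\tan\theta\approx 3.23\neq 4$), so your claimed formula for $L_n$ fails for a general tangential polygon. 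The correct bookkeeping, which is the paper's, attaches the angle to the \emph{vertices}: let $\theta_i$ be the common angle at $O$ in the two congruent right triangles cut off by the two equal tangent segments from the $i$th vertex, so each tangent length equals $R\tan\theta_i$; then $L_n=2R\sum_i\tan\theta_i$, $A_n=R^2\sum_i\tan\theta_i$ (sum of the kite areas), $\theta_i\in\left(0,\frac{\pi}{2}\right)$ by convexity, and $\sum_i 2\theta_i=2\pi$ gives $\sum_i\theta_i=\pi$. With this one-line repair everything downstream of the identities in your argument is unchanged and coincides with the paper's proof.
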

\begin{proof}
Denote by $a_i$ and $\theta_i$ the length of the $i$th side of ${\Gamma}_n$, and  the half of the central angle subtended by the $i$th vertex $A_i$ of ${\Gamma}_n$, $i=1,2,\cdots,n$, respectively. Then
\begin{figure}[h]
	\centering
	\includegraphics[width=5.6cm]{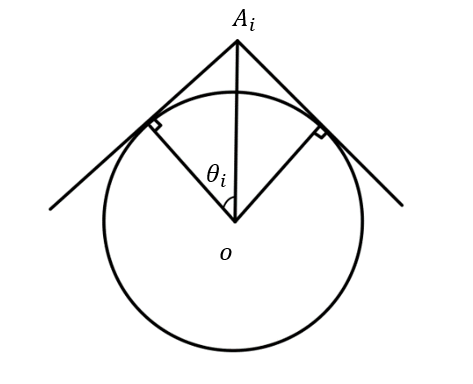}
	\label{1.png}
\end{figure}
\begin{equation}\label{fc33}
L_n=\sum_{i=1}^{n}a_i=2R\sum_{i=1}^{n}{\tan\theta_i};\ \ \ \ \ \ \ \ \ \ L_n^\ast=2nR\tan{\frac{\pi}{n}}.
\end{equation}
\begin{equation}\label{fc34}
\sum_{i=1}^{n}\theta_i=\pi;\ \ \ \ \ \ \ \ A_n=\frac{1}{2}\sum_{i=1}^{n}a_iR=R^2\sum_{i=1}^{n}{\tan\theta_i};{\ \ \ \ \ \ \ A}_n^\ast=nR^2\tan{\frac{\pi}{n}}.
\end{equation}
Substituting (\ref{fc33}) and (\ref{fc34}) into (\ref{fc212}) , we obtain (\ref{fc31}) and (\ref{fc32}).
\end{proof}
\begin{remark}
\emph{The inequality (\ref{fc32}) can be regard as the reverse inequality of (\ref{fc31}). }
\end{remark}

Assume that $\alpha=1$,  we have the following special case.
\begin{cor}
Assume that ${\Gamma}_n$ is an   $n$-sided  planar convex polygon circumscribed in a circle of radius $R$. Denote by  $L_n$ and  $A_n$  the perimeter and area  of
${\Gamma}_n$, then
\begin{equation}\label{fc35}
L_n^2-4d_nA_n\geq2R\left(\tan{\frac{\pi}{n}}\right)\left[L_n-L_n^\ast\right],
\end{equation}
\begin{equation}\label{fc36}
\left(\frac{A_n}{R^2}\right)^2-d_n\left(\frac{L_n}{2R}\right)\geq\left(\tan{\frac{\pi}{n}}\right)\left(\frac{L_n}{2R}-\frac{L_n^\ast}{2R}\right),
\end{equation}
where $d_n=n\tan{\frac{\pi}{n}}$. Both equalities hold when and only when ${\Gamma}_n$ is a regular polygon.
\end{cor}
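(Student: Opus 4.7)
The plan is to observe that this corollary is nothing more than the specialization $\alpha=1$ of Theorem \ref{3}, which has been proved just above. No new argument is needed; one only verifies that the substitution collapses cleanly.

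Setting $\alpha=1$ in (\ref{fc31}), the outer exponents all disappear: $L_n^{2\alpha}\to L_n^{2}$, $4^{\alpha}(d_n A_n)^{\alpha}\to 4d_n A_n$, the factor $2^{\alpha}R^{\alpha}(\tan\frac{\pi}{n})^{\alpha}$ becomes $2R\tan\frac{\pi}{n}$, and $L_n^{\alpha}-(L_n^{\ast})^{\alpha}$ becomes $L_n-L_n^{\ast}$, yielding exactly (\ref{fc35}). The identical substitution in (\ref{fc32}) reproduces (\ref{fc36}) verbatim. The equality clause is inherited from Theorem \ref{3}: both inequalities are saturated exactly when $\Gamma_n$ is a regular polygon.

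If one wished to bypass the citation of Theorem \ref{3}, the same conclusion can be reached by plugging $\alpha=1$ into the analytic inequality (\ref{fc212}) and then applying the geometric dictionary used in the proof of Theorem \ref{3}, namely $L_n = 2R\sum_{i=1}^{n}\tan\theta_i$, $A_n = R^{2}\sum_{i=1}^{n}\tan\theta_i$, $L_n^{\ast}=2nR\tan\frac{\pi}{n}$, with $\sum_{i=1}^{n}\theta_i=\pi$. This is the same bookkeeping carried out for general $\alpha$, now restricted to first powers.

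There is no real obstacle to overcome, since the corollary is a pure specialization. The equality characterization traces back to the strict Schur convexity established in Theorem \ref{qb}, which forces $\theta_1=\cdots=\theta_n=\pi/n$ at equality, i.e., $\Gamma_n$ regular.
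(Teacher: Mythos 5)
Your proposal is correct and coincides with the paper's treatment: the paper gives no separate argument either, stating the corollary as the $\alpha=1$ specialization of Theorem \ref{3} (equivalently, of (\ref{fc212}) combined with the substitutions (\ref{fc33})--(\ref{fc34})), with the equality case inherited from the strict Schur-convexity in Theorem \ref{qb}. Your verification that the substitution collapses cleanly to (\ref{fc35}) and (\ref{fc36}) is exactly the intended bookkeeping.
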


Similarly, substituting (\ref{fc33}) and (\ref{fc34}) into (\ref{fc212}), we obtain
\begin{theo}\label{4}
Assume that ${\Gamma}_n$ is an   $n$-sided  planar convex polygon circumscribed in a circle of radius $R$. Denote by  $L_n$ and  $A_n$  the perimeter and area  of
${\Gamma}_n$, then
\begin{equation}\label{fc37}
L_n^{2\alpha}-4^\alpha\left(d_nA_n\right)^\alpha\geq 4^\alpha\left(\tan{\frac{\pi}{n}}\right)^\alpha\left[A_n^\alpha-\left(A_n^\ast\right)^\alpha\right],\ \ \ \ d_n=n\tan{\frac{\pi}{n}},
\end{equation}
\begin{equation}\label{fc38}
\left(\frac{A_n}{R^2}\right)^{2\alpha}-d_n^\alpha\left(\frac{L_n}{2R}\right)^\alpha\geq\left(\tan{\frac{\pi}{n}}\right)^\alpha\left[\left(\frac{A_n}{R^2}\right)^\alpha-\left(\frac{A_n^\ast}{R^2}\right)^\alpha\right],\ \ \ \ d_n=n\tan{\frac{\pi}{n}},
\end{equation}
where  $\alpha\in\mathbb N^{+}$, and $A_n^\ast$ denotes the area of the regular $n$-sided convex polygon  circumscribed in the same circle with ${\Gamma}_n$. Both equalities hold when and only when ${\Gamma}_n$ is a regular polygon.
\end{theo}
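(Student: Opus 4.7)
The plan is to derive Theorem \ref{4} directly from the analytic inequality (\ref{fc212}) of the first corollary, by the same reduction used for Theorem \ref{3} but routing the right-hand side through the area identity instead of the perimeter identity.

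First I would parametrize $\Gamma_n$ exactly as in the proof of Theorem \ref{3}: let $\theta_i$ be half the central angle at the $i$th vertex of $\Gamma_n$ as seen from the center of the circumscribed circle, so that $\theta_i \in (0,\pi/2)$ and $\sum_{i=1}^n \theta_i = \pi$. The identities (\ref{fc33})--(\ref{fc34}) then give $L_n = 2R \sum \tan\theta_i$, $A_n = R^2 \sum \tan\theta_i$, $L_n^\ast = 2nR\tan(\pi/n)$ and $A_n^\ast = nR^2\tan(\pi/n)$. The crucial observation is the double identity
\begin{equation*}
\sum_{i=1}^n \tan\theta_i \;=\; \frac{L_n}{2R} \;=\; \frac{A_n}{R^2},
\end{equation*}
which lets one and the same symmetric sum be read as either a perimeter quantity or an area quantity.

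Next I would apply (\ref{fc212}) to these $\theta_i$ and multiply through by $4^\alpha R^{2\alpha}$. On the left, rewriting the first-power factor of $\sum \tan\theta_i$ via the perimeter identity and the remaining factor via the area identity recovers $L_n^{2\alpha} - 4^\alpha (d_n A_n)^\alpha$. For (\ref{fc37}), on the right I would substitute $\sum \tan\theta_i = A_n/R^2$ (rather than $L_n/(2R)$, which was used for Theorem \ref{3}): the factor $R^{2\alpha}$ is then absorbed into $(A_n/R^2)^\alpha$ to produce $A_n^\alpha$, while $R^{2\alpha}(n\tan(\pi/n))^\alpha$ combines into $(A_n^\ast)^\alpha$, giving exactly (\ref{fc37}). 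For (\ref{fc38}) no rescaling at all is needed: substituting $\sum\tan\theta_i = A_n/R^2$ throughout the left-hand side and the first bracket on the right, and $n\tan(\pi/n) = A_n^\ast/R^2$ elsewhere, turns (\ref{fc212}) directly into (\ref{fc38}).

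The main obstacle, such as it is, is purely bookkeeping: tracking which occurrence of $\sum \tan\theta_i$ must be rewritten via the perimeter identity and which via the area identity so that each side of the resulting inequality takes the advertised form. The equality case $\theta_1 = \cdots = \theta_n = \pi/n$ of (\ref{fc212}) transfers verbatim under the substitution and corresponds precisely to $\Gamma_n$ being regular, so the characterization of equality requires no additional argument.
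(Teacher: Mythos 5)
Your proposal is correct and follows essentially the same route as the paper: the paper also obtains Theorem \ref{4} by substituting the identities (\ref{fc33})--(\ref{fc34}) into (\ref{fc212}), the only content being exactly the bookkeeping you describe, namely reading $\sum_{i=1}^{n}\tan\theta_i$ as $A_n/R^2$ (area form) rather than $L_n/(2R)$ (perimeter form) where needed, together with the factor $4^\alpha R^{2\alpha}$ rescaling for (\ref{fc37}). The equality discussion also matches the paper's.
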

(\ref{fc38}) can be regard as the reverse inequality of (\ref{fc37}).
When $\alpha=1$, we have

\begin{cor}
Assume that ${\Gamma}_n$ is an   $n$-sided  planar convex polygon circumscribed in a circle of radius $R$. Denote by  $L_n$ and  $A_n$  the perimeter and area  of
${\Gamma}_n$, then
\begin{equation}\label{qx}
L_n^2-4d_nA_n\geq4\left(\tan{\frac{\pi}{n}}\right)\left[A_n-A_n^\ast\right],
\end{equation}
\begin{equation}\label{qc}
\left(\frac{A_n}{R^2}\right)^2-d_n\left(\frac{L_n}{2R}\right)\geq\left(\tan{\frac{\pi}{n}}\right)\left[\left(\frac{A_n}{R^2}\right)-\left(\frac{A_n^\ast}{R^2}\right)\right],
\end{equation}
where $d_n=n\tan{\frac{\pi}{n}}$. Both equalities hold when and only when ${\Gamma}_n$ is a regular polygon.
\end{cor}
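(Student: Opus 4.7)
My first observation is that this corollary is precisely the $\alpha=1$ specialization of Theorem \ref{4}: setting $\alpha=1$ in (\ref{fc37}) and (\ref{fc38}) reproduces (\ref{qx}) and (\ref{qc}) verbatim. So at the level of logical content, the cleanest proof is a single sentence referring back to Theorem \ref{4}. Nevertheless, I would outline a short self-contained derivation that mirrors the proof of Theorem \ref{3}, so the reader does not have to recompute the bookkeeping with $\alpha=1$.

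Following the notation introduced in the proof of Theorem \ref{3}, the plan is to let $\theta_i$ denote half the central angle subtended at the $i$-th vertex of $\Gamma_n$, so that $\theta_i\in(0,\pi/2)$ and $\sum_{i=1}^n\theta_i=\pi$. The identities (\ref{fc33}) and (\ref{fc34}) then yield
$$\frac{L_n}{2R}=\frac{A_n}{R^2}=\sum_{i=1}^n\tan\theta_i,\qquad \frac{L_n^\ast}{2R}=\frac{A_n^\ast}{R^2}=n\tan\tfrac{\pi}{n}.$$
The crucial bookkeeping observation is that the perimeter and the area of a circumscribed polygon carry the same combinatorial content $P_n:=\sum_{i=1}^n\tan\theta_i$; this double role is what lets one analytic inequality produce both (\ref{qx}) and (\ref{qc}).

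Next I would invoke the Corollary containing (\ref{fc212}) with $\alpha=1$, which on our angles asserts
$$P_n^{2}-n\tan\tfrac{\pi}{n}\,P_n\;\geq\;\tan\tfrac{\pi}{n}\bigl[P_n-n\tan\tfrac{\pi}{n}\bigr].$$
Inequality (\ref{qc}) is then immediate, since the left-hand side equals $(A_n/R^2)^2-d_n(L_n/(2R))$ and the right-hand side equals $\tan(\pi/n)\bigl[A_n/R^2-A_n^\ast/R^2\bigr]$. For (\ref{qx}) I would multiply the displayed inequality by $4R^2$: the left-hand side becomes $L_n^2-4d_nA_n$, while the right-hand side becomes $4\tan(\pi/n)[A_n-A_n^\ast]$, which is exactly (\ref{qx}).

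There is essentially no substantive obstacle here: the work amounts to a direct substitution plus a single scaling by $4R^2$, riding entirely on Theorem \ref{qb}. The equality characterization also transfers for free, since (\ref{fc212}) is sharp precisely when $\theta_1=\cdots=\theta_n=\pi/n$, which is exactly the condition that $\Gamma_n$ be regular. If anything tempts an error it is keeping track of the two different normalizations $L_n/(2R)$ versus $A_n/R^2$, but the identity above reduces this to a single scalar $P_n$.
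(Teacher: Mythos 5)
Your proposal is correct and follows the paper's own route: the corollary is read off as the $\alpha=1$ case of Theorem \ref{4}, which itself comes from substituting the circumscribed-polygon identities (\ref{fc33})--(\ref{fc34}) (i.e.\ $L_n/(2R)=A_n/R^2=\sum_i\tan\theta_i$) into the tangent inequality (\ref{fc212}); your explicit $\alpha=1$ substitution and the scaling by $4R^2$ are just that argument written out, with the same equality characterization.
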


\section{Inverse Discrete Bonnesen-type isoperimetric inequalities}
 It is interesting and also difficult to study the reverse  Bonnesen-style isoperimetric inequalities. There are only a few reverse  Bonnesen-style isoperimetric inequalities of convex domains (see \cite{Gao X2012A new reverse}). In 1993, Bottema discovered  a classic  reverse  Bonnesen-style isoperimetric inequality  of oval domains in $\mathbb{R}^2$(\cite{Ren D1994 Topics}). The discrete cases are more complex and difficult.

Assume that $K$ is a convex domain  with   $C^2$ smooth boundary in $\mathbb R^{2}$, then (\cite{Ren D1994 Topics})
\begin{equation*}
\mathrm{\Delta}\left(K\right)=L^2-4\pi A\le\pi^2\left(\rho_M-\rho_m\right)^2,
\end{equation*}
where $\rho_M$ and $\rho_m$ are the maximum  and minimum value of $\rho\left(\partial K\right)$, respectively.  The equality holds when  and only when $\partial K$ is a circle.

Mathematicians still continue to work hard on  investigating the inverse discrete Bonnesen-type isoperimetric inequalities.  A natural question is that: for an $n$-sided  planar convex polygon if there is a geometric invariant $D_n$ such that
\begin{equation*}
L_n^2-4d_nA_n\le D_n,\ \ \ d_n=n\tan{\frac{\pi}{n}},
\end{equation*}
where $D_n$ satisfies
\begin{enumerate}
  \item $D_n\geq 0$;
  \item $D_n=0$ only when $H_n$ is regular.
\end{enumerate}

In order to obtain the inverse discrete Bonnesen-style isoperimetric inequalities, we use the analysis isoperimetric  inequalities in  Theroem \ref{2}. Substituting (\ref{fc33}) and (\ref{fc34}) into (\ref{fc221}), then we have

\begin{theo}\label{theo5}
Assume that ${\Gamma}_n$ is an  $n$-sided  planar convex polygon circumscribed in a circle of radius $R$. Denote by  $L_n$ and  $A_n$  the perimeter and area  of
${\Gamma}_n$, then
\begin{equation}\label{fc41}
L_n^{2\alpha}-\left(4d_nA_n\right)^\alpha\le L_n^{k\alpha}-\left(L_n^\ast\right)^{k\alpha},\ \ \ d_n=n\tan{\frac{\pi}{n}},
\end{equation}
\begin{equation}\label{fc42}
\left(\frac{A_n}{R^2}\right)^{2\alpha}-d_n^\alpha\left(\frac{L_n}{2R}\right)^\alpha\le\left(\frac{L_n}{2R}\right)^{k\alpha}-\left(\frac{L_n^\ast}{2R}\right)^{k\alpha},\ 
\end{equation}
where  $\alpha, k\in\mathbb N^{+}$ and $k\geq 2$. Both equalities hold when and only when ${\Gamma}_n$ is a  regular polygon.
\end{theo}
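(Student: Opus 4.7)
The plan is to feed the analytic inequality (\ref{fc221}) directly into the tangential parametrization of a circumscribed polygon, mirroring the substitutions used in the proofs of Theorems \ref{3} and \ref{4}. Let $\theta_i$ denote half of the central angle of $\Gamma_n$ at vertex $A_i$, measured at the center of the inscribed circle. Because $\Gamma_n$ is convex and circumscribes the circle of radius $R$, each $\theta_i$ lies in $(0,\pi/2)$ and $\sum_{i=1}^n \theta_i = \pi$. The side lengths are then $a_i = 2R\tan\theta_i$, which reproduces (\ref{fc33}) and (\ref{fc34}): $L_n = 2R\sum\tan\theta_i$, $A_n = R^2\sum\tan\theta_i$, $L_n^\ast = 2nR\tan(\pi/n)$, and $A_n^\ast = nR^2\tan(\pi/n)$.

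Next I would invoke (\ref{fc221}). Since $\tan$ is positive and strictly convex on $(0,\pi/2)$ and the average of the $\theta_i$ equals $\sigma = \pi/n$, the corollary applies directly. Using the parallel identities $\sum\tan\theta_i = A_n/R^2 = L_n/(2R)$ and $n\tan(\pi/n) = A_n^\ast/R^2 = L_n^\ast/(2R)$, term-by-term substitution converts (\ref{fc221}) into its geometric counterpart; reading off the normalized dimensionless form yields (\ref{fc42}) immediately. For (\ref{fc41}), I would first observe the algebraic identity $4d_n A_n = L_n L_n^\ast$, so that $(4d_n A_n)^\alpha = L_n^\alpha (L_n^\ast)^\alpha$, and then multiply the analytic inequality through by $(2R)^{2\alpha}$ to turn its left-hand side into $L_n^{2\alpha} - (4d_n A_n)^\alpha$ and rearrange the right-hand side to the stated form involving $L_n^{k\alpha} - (L_n^\ast)^{k\alpha}$.

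Finally, equality in (\ref{fc221}) occurs iff $\theta_1 = \cdots = \theta_n = \pi/n$, which is exactly the condition that $\Gamma_n$ is the regular $n$-gon circumscribed in the given circle; this equality characterization transfers verbatim to (\ref{fc41}) and (\ref{fc42}). The substantive analytic work — strict Schur-concavity on $D_n$ — has already been absorbed into Theorem \ref{2}, so no genuine obstacle remains at this stage. The chief points requiring care are verifying that convexity of $\Gamma_n$ really forces each $\theta_i$ into the open interval $(0,\pi/2)$ (needed so that $\tan$ is both positive and strictly convex, and hence so that Theorem \ref{2} is applicable), and keeping the rescaling bookkeeping straight in the passage from the analytic inequality to (\ref{fc41}).
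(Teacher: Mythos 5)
Your route is the same as the paper's: its entire proof of Theorem \ref{theo5} is the one-line substitution of (\ref{fc33}) and (\ref{fc34}) into (\ref{fc221}), exactly as you propose, and for (\ref{fc42}) this works verbatim, since with $\sum_i\tan\theta_i=L_n/(2R)=A_n/R^2$ and $n\tan(\pi/n)=L_n^\ast/(2R)=d_n$ the inequality (\ref{fc221}) \emph{is} (\ref{fc42}), equality case included.

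However, the step you describe for (\ref{fc41}) --- ``multiply through by $(2R)^{2\alpha}$ and rearrange the right-hand side'' --- is not a rearrangement, and this is where a genuine gap sits (one the paper itself glosses over). Writing $T=\sum_i\tan\theta_i$ and $T^\ast=n\tan(\pi/n)$, multiplying (\ref{fc221}) by $(2R)^{2\alpha}$ gives, using your correct identity $4d_nA_n=L_nL_n^\ast$,
\begin{equation*}
L_n^{2\alpha}-\left(4d_nA_n\right)^\alpha\le (2R)^{2\alpha}\left[T^{k\alpha}-\left(T^\ast\right)^{k\alpha}\right]=(2R)^{(2-k)\alpha}\left[L_n^{k\alpha}-\left(L_n^\ast\right)^{k\alpha}\right],
\end{equation*}
which coincides with (\ref{fc41}) only when $k=2$. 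For $k\ge3$ the passage from this to (\ref{fc41}) requires $(2R)^{(2-k)\alpha}\le 1$, i.e.\ $2R\ge1$, which is not assumed; indeed (\ref{fc41}) is dimensionally inhomogeneous (degree $2\alpha$ on the left versus $k\alpha$ on the right), so shrinking a non-regular $\Gamma_n$ homothetically makes the right side tend to $0$ faster than the left and the stated inequality fails for small $R$. Compare Theorem \ref{6}, where the compensating factor $4^\alpha/R^{2(k-1)\alpha}$ is retained in (\ref{fc45}); the honest output of your (and the paper's) substitution is (\ref{fc41}) with the extra factor $(2R)^{(2-k)\alpha}$ on the right, or (\ref{fc41}) as stated under the additional hypothesis $2R\ge1$ (or $k=2$). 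So your proposal reproduces the paper's argument, but the ``rescaling bookkeeping'' you flagged is precisely the point at which both need the correction above.
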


Specially, we can regard inequality (\ref{fc42})  as the reverse inequality of (\ref{fc41}).

In particular, when $\alpha=1$ and $k=2,3$,   Theorem \ref{theo5} can be expressed as follows.
\begin{cor}
Assume that ${\Gamma}_n$ is an  $n$-sided  planar convex polygon circumscribed in a circle of radius $R$. Denote by  $L_n$ and  $A_n$  the perimeter and area  of
${\Gamma}_n$, then
\begin{equation}
L_n^2-4 d_nA_n\le L_n^3-(L_n^\ast)^3,
\end{equation}
\begin{equation}
\left(\frac{A_n}{R^2}\right)^2-d_n\left(\frac{L_n}{2R}\right)\le\left(\frac{L_n}{2R}\right)^2-\left(\frac{L_n^\ast}{2R}\right)^2,
\end{equation}
where $d_n=n\tan{\frac{\pi}{n}}$. Both equalities hold when and only when ${\Gamma}_n$ is a regular polygon.
\end{cor}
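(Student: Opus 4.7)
The plan is to deduce both inequalities from the analytic corollary (\ref{fc221}) of Theorem \ref{2} (the case $f(\theta)=\tan\theta$), via the standard trigonometric parametrization of circumscribed convex polygons recorded in (\ref{fc33})--(\ref{fc34}). The whole argument is a substitution, with the geometric equality case inherited directly from the analytic one.

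Concretely, for each vertex $A_i$ of $\Gamma_n$ I would let $\theta_i\in(0,\pi/2)$ be the half-central angle, so that $\sum_{i=1}^n\theta_i=\pi$ and
$$\sum_{i=1}^n\tan\theta_i=\frac{L_n}{2R}=\frac{A_n}{R^2},\qquad d_n=n\tan\frac{\pi}{n}=\frac{L_n^\ast}{2R}.$$
Applying (\ref{fc221}) to the tuple $(\theta_1,\dots,\theta_n)$ produces
$$\Bigl(\sum_{i=1}^n\tan\theta_i\Bigr)^{2\alpha}-d_n^\alpha\Bigl(\sum_{i=1}^n\tan\theta_i\Bigr)^\alpha\le\Bigl(\sum_{i=1}^n\tan\theta_i\Bigr)^{k\alpha}-d_n^{k\alpha}.$$
Substituting $\sum\tan\theta_i=L_n/(2R)=A_n/R^2$ on the left and $\sum\tan\theta_i=L_n/(2R)$, $d_n=L_n^\ast/(2R)$ on the right immediately yields (\ref{fc42}).

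For (\ref{fc41}) I would instead multiply the inequality above through by $(2R)^{2\alpha}$ and use the identity $2RL_n=4A_n$: the term $(2R)^{2\alpha}(\sum\tan\theta_i)^{2\alpha}$ becomes $L_n^{2\alpha}$, and the term $(2R)^{2\alpha}d_n^\alpha(\sum\tan\theta_i)^\alpha=d_n^\alpha(2R\sum\tan\theta_i)^\alpha(2R)^\alpha=d_n^\alpha L_n^\alpha(2R)^\alpha=(4d_nA_n)^\alpha$, so the left-hand side is exactly $L_n^{2\alpha}-(4d_nA_n)^\alpha$. On the right, using $L_n^\ast=2Rd_n$ converts $d_n^{k\alpha}$ into $(L_n^\ast)^{k\alpha}/(2R)^{k\alpha}$, and after collecting the $(2R)$-factors one obtains the desired $L_n^{k\alpha}-(L_n^\ast)^{k\alpha}$. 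I expect the bookkeeping of the powers of $2R$ to be the only delicate step: because the two sides of (\ref{fc221}) carry different total degrees in $\sum\tan\theta_i$, the rescaling introduces a compensating factor $(2R)^{(k-2)\alpha}$ on the right, and one has to verify that it is absorbed in a direction consistent with the inequality.

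The equality clause is essentially free: by Theorem \ref{2}, equality in (\ref{fc221}) holds precisely when $\theta_1=\cdots=\theta_n=\pi/n$, which is exactly the condition that all half-central angles coincide, i.e.~that $\Gamma_n$ is the regular $n$-gon circumscribed about the given circle. This equality condition is preserved under the algebraic substitutions above, so it transfers without change to both (\ref{fc41}) and (\ref{fc42}).
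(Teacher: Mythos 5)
Your treatment of the second inequality is correct and coincides with the paper's route: the paper obtains this corollary simply by setting $\alpha=1$, $k=3$ (first display) and $k=2$ (second display) in Theorem \ref{theo5}, which in turn is exactly the substitution of (\ref{fc33})--(\ref{fc34}) into (\ref{fc221}) that you perform. Since $\frac{A_n}{R^2}=\frac{L_n}{2R}=\sum_{i=1}^{n}\tan\theta_i$ and $d_n=\frac{L_n^\ast}{2R}$, inequality (\ref{fc42}) is a literal rewriting of (\ref{fc221}) in scale-invariant quantities, and the equality case transfers from Theorem \ref{2} exactly as you say.

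The step you flagged in the first inequality, however, is a genuine gap and cannot be closed as stated. Multiplying (\ref{fc221}) by $(2R)^{2\alpha}$ yields, with the left-hand side computed as you did,
\begin{equation*}
L_n^{2\alpha}-\left(4d_nA_n\right)^\alpha\le\left(2R\right)^{\left(2-k\right)\alpha}\left[L_n^{k\alpha}-\left(L_n^\ast\right)^{k\alpha}\right],
\end{equation*}
and since $L_n\ge L_n^\ast$ the bracket is nonnegative, so the compensating factor $(2R)^{(2-k)\alpha}$ may be discarded only when $2R\ge 1$. For the corollary's first display ($\alpha=1$, $k=3$) your substitution honestly gives $L_n^2-4d_nA_n\le\frac{1}{2R}\left(L_n^3-\left(L_n^\ast\right)^3\right)$, which is weaker than the stated bound whenever $2R<1$; in fact the stated bound cannot hold without a normalization, because the two sides have different homogeneity: shrinking a fixed non-regular circumscribed polygon by a factor $\lambda\to 0$ makes the left side decay like $\lambda^2$ and the right side like $\lambda^3$, so the left side eventually exceeds the right. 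Your hesitation is therefore well founded, and the same defect sits in the paper itself: the proof of Theorem \ref{theo5} ``substitutes (\ref{fc33}) and (\ref{fc34}) into (\ref{fc221})'' while silently identifying $(2R)^{2\alpha}$ with $(2R)^{k\alpha}$. A correct version either keeps the factor $(2R)^{(2-k)\alpha}$ on the right, states the inequality in the normalized variables $\frac{L_n}{2R}$ and $\frac{A_n}{R^2}$ as in (\ref{fc42}), or adds the hypothesis $2R\ge 1$.
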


Substituting (\ref{fc33}) and (\ref{fc34}) into (\ref{fc221}), then we have
\begin{theo}\label{6}
Assume that ${\Gamma}_n$ is an  $n$-sided  planar convex polygon circumscribed in a circle of radius $R$. Denote by  $L_n$ and  $A_n$  the perimeter and area  of
${\Gamma}_n$, then
\begin{equation}\label{fc45}
L_n^{2\alpha}-4^\alpha\left(d_nA_n\right)^\alpha\le\frac{4^\alpha}{R^{2\left(k-1\right)\alpha}}\left[A_n^{k\alpha}-\left(A_n^\ast\right)^{k\alpha}\right],\ \ \ \ d_n=n\tan{\frac{\pi}{n}},
\end{equation}
\begin{equation}\label{fc46}
\left(\frac{A_n}{R^2}\right)^{2\alpha}-d_n^\alpha\left(\frac{L_n}{2R}\right)^\alpha\le\left(\frac{A_n}{R^2}\right)^{k\alpha}-\left(\frac{A_n^\ast}{R^2}\right)^{k\alpha},\ 
\end{equation}
where $\alpha, k$ and $k\ge 2$. Both equalities hold when and only when ${\Gamma}_n$ is a regular polygon.

\end{theo}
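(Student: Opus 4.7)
The plan is to deduce Theorem \ref{6} from the tangent specialization (\ref{fc221}) of Theorem \ref{2}, in exactly the same spirit as Theorem \ref{theo5}. Let $\theta_i$ denote half of the central angle subtended at the $i$-th vertex of the circumscribed polygon $\Gamma_n$. By (\ref{fc33})--(\ref{fc34}) we have $\theta_i\in(0,\pi/2)$ with $\sum_{i=1}^{n}\theta_i=\pi$ (so $\sigma=\pi/n$), together with the identities
\[
\sum_{i=1}^{n}\tan\theta_i=\frac{L_n}{2R}=\frac{A_n}{R^2},\qquad n\tan\frac{\pi}{n}=d_n=\frac{A_n^\ast}{R^2},\qquad L_n R=2A_n,
\]
which place us squarely in the hypotheses of (\ref{fc221}).

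First I would derive (\ref{fc46}) by direct substitution into (\ref{fc221}). On the left-hand side, replace $\sum\tan\theta_i$ by either $A_n/R^2$ or $L_n/(2R)$ (they coincide) and $n\tan(\pi/n)$ by $d_n$; on the right-hand side, replace $\sum\tan\theta_i$ by $A_n/R^2$ and $n\tan(\pi/n)$ by $A_n^\ast/R^2$. Then (\ref{fc46}) falls out verbatim, and the equality case of (\ref{fc221}) --- namely $\theta_1=\cdots=\theta_n=\pi/n$ --- is geometrically the statement that $\Gamma_n$ is regular.

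Next I would derive (\ref{fc45}) from (\ref{fc46}) by multiplying through by $4^\alpha R^{2\alpha}$ and invoking the circumscription identity $L_n R=2A_n$, i.e. $L_n^{2\alpha}=4^\alpha A_n^{2\alpha}/R^{2\alpha}$ and $R^\alpha L_n^\alpha=2^\alpha A_n^\alpha$. The left-hand side rearranges to
\[
\frac{4^\alpha A_n^{2\alpha}}{R^{2\alpha}}-2^\alpha R^\alpha d_n^\alpha L_n^\alpha \;=\; L_n^{2\alpha}-4^\alpha(d_nA_n)^\alpha,
\]
while the right-hand side becomes $4^\alpha R^{2\alpha-2k\alpha}\bigl[A_n^{k\alpha}-(A_n^\ast)^{k\alpha}\bigr]=\bigl(4^\alpha/R^{2(k-1)\alpha}\bigr)\bigl[A_n^{k\alpha}-(A_n^\ast)^{k\alpha}\bigr]$, which is precisely (\ref{fc45}), again with equality iff $\Gamma_n$ is regular.

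There is no serious obstacle; the entire analytic content has already been absorbed into Theorem \ref{2}, so the work is purely algebraic bookkeeping. The one thing to be careful about is the unusual exponent $2(k-1)\alpha$ on $R$ in (\ref{fc45}): it arises as the difference $2k\alpha-2\alpha$, where $2k\alpha$ comes from expanding $(A_n/R^2)^{k\alpha}$ on the right of (\ref{fc46}) and $2\alpha$ is the rescaling factor needed to convert the left-hand side from its $A_n/R^2$ form into the $L_n^{2\alpha}$ form demanded by (\ref{fc45}). Once that accounting is done, (\ref{fc45}) and (\ref{fc46}) are equivalent manifestations of the same inequality (\ref{fc221}).
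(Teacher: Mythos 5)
Your proposal is correct and follows essentially the same route as the paper, which likewise obtains Theorem \ref{6} by substituting the circumscribed-polygon identities (\ref{fc33})--(\ref{fc34}) into (\ref{fc221}); your extra step of deriving (\ref{fc45}) from (\ref{fc46}) via $L_nR=2A_n$ and multiplication by $4^\alpha R^{2\alpha}$ is just a more explicit bookkeeping of the same substitution, and the exponent $2(k-1)\alpha$ checks out.
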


In particular, when $\alpha=1$ and $k=2, 3$,  Theorem \ref{theo5} also can  be expressed as follows.

\begin{cor}
Assume that ${\Gamma}_n$ is an  $n$-sided  planar convex polygon circumscribed in a circle of radius $R$. Denote by  $L_n$ and  $A_n$  the perimeter and area  of
${\Gamma}_n$, then
\begin{equation}
L_n^2-4d_nA_n\le\frac{4}{R^2}\left[A_n^2-\left(A_n^\ast\right)^2\right],
\end{equation}
\begin{equation}
\left(\frac{A_n}{R^2}\right)^2-d_n\left(\frac{L_n}{2R}\right)\le\left(\frac{A_n}{R^2}\right)^3-\left(\frac{A_n^\ast}{R^2}\right)^3,
\end{equation}
where $d_n=n\tan{\frac{\pi}{n}}$. Both equalities hold when and only when ${\Gamma}_n$ is a regular polygon.
\end{cor}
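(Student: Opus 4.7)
The plan is to obtain both inequalities as immediate specializations of Theorem~\ref{6}; no new analytic estimate is needed. The substantive work has already been carried out upstream: the Schur-concavity argument in the proof of Theorem~\ref{2}, and the geometric translation in Theorem~\ref{6} via the identities (\ref{fc33})--(\ref{fc34}) together with $\sum_{i=1}^n \theta_i = \pi$, which link $\sum \tan\theta_i$ to $L_n/(2R)$ and to $A_n/R^2$.

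For the first inequality I would set $\alpha = 1$ and $k = 2$ in (\ref{fc45}). The prefactor $4^\alpha/R^{2(k-1)\alpha}$ collapses to $4/R^2$, and the left-hand side $L_n^{2\alpha}-4^\alpha(d_nA_n)^\alpha$ becomes $L_n^2-4d_nA_n$, producing the claimed bound $L_n^2 - 4 d_n A_n \le (4/R^2)\bigl[A_n^2 - (A_n^\ast)^2\bigr]$ directly. For the second inequality I would take $\alpha = 1$ and $k = 3$ in (\ref{fc46}); after the exponents collapse this reads exactly $(A_n/R^2)^2 - d_n(L_n/(2R)) \le (A_n/R^2)^3 - (A_n^\ast/R^2)^3$. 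Both parameter choices $(\alpha,k) = (1,2)$ and $(1,3)$ satisfy the hypothesis $\alpha,k \in \mathbb{N}^+$ with $k \ge 2$ required by Theorem~\ref{6}.

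The equality statement is inherited along the same route: equality in Theorem~\ref{6} pulls back to equality in Theorem~\ref{2}, which in turn holds precisely when all the half-central angles $\theta_i$ coincide; by the constraint $\sum_{i=1}^n \theta_i = \pi$ this forces $\theta_i = \pi/n$ for every $i$, equivalently $\Gamma_n$ is regular. There is no real obstacle; the only thing to watch is that the two displayed inequalities correspond to two \emph{different} parents, namely (\ref{fc45}) and (\ref{fc46}), so one must specialize each one with the correct value of $k$. Once the parameter matching is done, the corollary drops out without further calculation.
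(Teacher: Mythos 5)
Your proposal is correct and matches the paper's route exactly: the paper obtains the corollary by specializing Theorem~\ref{6} with $\alpha=1$, taking $k=2$ in (\ref{fc45}) for the first inequality and $k=3$ in (\ref{fc46}) for the second, with the equality case inherited from Theorem~\ref{6}. Your parameter matching and the tracing of the equality condition back through Theorem~\ref{2} to $\theta_1=\cdots=\theta_n=\pi/n$ are exactly what is intended.
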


\section{some notes}
Many   special analytic inequalities have interesting geometric consequences.
In this section, we first prove the following analytic inequalities (Theorem \ref{qe}) in which we set two different
variables $\theta_i$ and $\psi_i$. As the consequence of Theorem  \ref{qe}, we present  new proofs of Zhang's result \cite{Zhang X1997Bonnesen-style} and Ma's result  (\cite{Ma L2015A Bonnesen-style}).
Note that there is a defect in the proof of Ma's main theorem. For completeness,
 we give the new proofs of these isoperimetric style inequalities.

Assume that $\theta_i$ and $\psi_i$ are  real numbers in $\left(0,l\right)$, $i=1,2,\cdots,n$, $\sum_{i=1}^{n}\theta_i=ml=\sum_{i=1}^{n}\psi_i$, where $m$ is a positive constant less then $n$. Then

\begin{theo}\label{qe}
Assume that $f\left(\theta\right)$  is a positive $C^2$-function on $\left(0,l\right)$, and it  satisfies
\begin{equation}\label{fc51}
{f^\prime}^2\left(\theta\right)-f\left(\theta\right)f^{\prime\prime}\left(\theta\right)=\mu,\ \ \theta \in \left(0,l\right),
\end{equation}
where $\mu$ is a constant and $f^\prime\left(\theta\right)f^{\prime\prime}\left(\theta\right)\neq0$.
\begin{enumerate}[(i)]
  \item\begin{enumerate}[(1)]
          \item If $f^{\prime\prime}\left(\theta\right)<0$ and $f^\prime\left(\theta\right)>0$ on $\left(0,l\right)$, then
  \begin{equation}\label{fc52}
  2\sum_{i=1}^{n}{f\left(\theta_i\right)f^\prime\left(\psi_i\right)}\geq\sum_{i=1}^{n}{f\left(\theta_i\right)f^\prime\left(\theta_i\right)}+\sum_{i=1}^{n}{f\left(\psi_i\right)f^\prime\left(\psi_i\right)};
  \end{equation}
  \item If $f^{\prime\prime}\left(\theta\right)<0$ and $f^\prime\left(\theta\right)<0$ on $\left(0,l\right)$, then
  \begin{equation}
  2\sum_{i=1}^{n}{f\left(\theta_i\right)f^\prime\left(\psi_i\right)}\leq\sum_{i=1}^{n}{f\left(\theta_i\right)f^\prime\left(\theta_i\right)}+\sum_{i=1}^{n}{f\left(\psi_i\right)f^\prime\left(\psi_i\right)};
  \end{equation}

  \end{enumerate}

  \item\begin{enumerate}[(1)]
         \item If $f^{\prime\prime}\left(\theta\right)>0$ and $f^\prime\left(\theta\right)>0$ on $\left(0,l\right)$, then
  \begin{equation}
  2\sum_{i=1}^{n}{f\left(\theta_i\right)f^\prime\left(\psi_i\right)}\leq\sum_{i=1}^{n}{f\left(\theta_i\right)f^\prime\left(\theta_i\right)}+\sum_{i=1}^{n}{f\left(\psi_i\right)f^\prime\left(\psi_i\right)};
  \end{equation}

\item If $f^{\prime\prime}\left(\theta\right)>0$ and $f^\prime\left(\theta\right)<0$ on $\left(0,l\right)$, then
  \begin{equation}
  2\sum_{i=1}^{n}{f\left(\theta_i\right)f^\prime\left(\psi_i\right)}\geq\sum_{i=1}^{n}{f\left(\theta_i\right)f^\prime\left(\theta_i\right)}+\sum_{i=1}^{n}{f\left(\psi_i\right)f^\prime\left(\psi_i\right)}.
  \end{equation}
       \end{enumerate}
\end{enumerate}
The equalities hold when and only when $\theta_i=\psi_i$.
\end{theo}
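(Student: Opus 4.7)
The plan is to reduce all four cases of Theorem \ref{qe} to a single integral identity. Setting
\[\phi(\theta,\psi):=2f(\theta)f'(\psi)-f(\theta)f'(\theta)-f(\psi)f'(\psi),\]
each claim reads $\sum_{i=1}^{n}\phi(\theta_i,\psi_i)\ge 0$ (cases (i)(1) and (ii)(2)) or $\le 0$ (cases (i)(2) and (ii)(1)). I would first exploit the hypothesis $(f')^2-ff''=\mu$ to observe $(ff')'=(f')^2+ff''=2(f')^2-\mu$, and integrate over $[\psi,\theta]$ to obtain $2\int_{\psi}^{\theta}(f'(t))^2\,dt=f(\theta)f'(\theta)-f(\psi)f'(\psi)+\mu(\theta-\psi)$. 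Combining this with the tautology $f(\theta)-f(\psi)=\int_\psi^\theta f'(t)\,dt$ and a short rearrangement of the definition of $\phi$ yields the key identity
\[\phi(\theta,\psi)=\mu(\theta-\psi)+2\int_{\psi}^{\theta}f'(t)\bigl[f'(\psi)-f'(t)\bigr]\,dt.\]

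Next I would sum this identity over $i$ and invoke the constraint $\sum_i(\theta_i-\psi_i)=0$ to eliminate the linear $\mu$-term, leaving
\[\sum_{i=1}^{n}\phi(\theta_i,\psi_i)=2\sum_{i=1}^{n}\int_{\psi_i}^{\theta_i}f'(t)\bigl[f'(\psi_i)-f'(t)\bigr]\,dt.\]
The sign of each integral is dictated by the signs of $f'$ and $f''$. Take case (i)(1) as the model: $f'>0$ and $f''<0$ force $f'$ to be strictly decreasing and positive, so $f'(\psi_i)-f'(t)$ has the same sign as $t-\psi_i$ and multiplying by the positive factor $f'(t)$ shows the integrand has the same sign as $t-\psi_i$. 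Consequently $\int_{\psi_i}^{\theta_i}f'(t)[f'(\psi_i)-f'(t)]\,dt\ge 0$, whether or not $\theta_i>\psi_i$, and summing gives the desired inequality. The other three subcases are identical bookkeeping: flipping the sign of $f'$ reverses the sign of one factor, flipping $f''$ reverses the monotonicity of $f'$ and hence the sign of the other factor, so each sign flip of a single hypothesis reverses the inequality, matching the statement of the theorem in all four regimes.

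For the equality clause, in each case the integrand $f'(t)[f'(\psi_i)-f'(t)]$ has one fixed sign on the closed interval with endpoints $\theta_i$ and $\psi_i$, so the vanishing of $\int_{\psi_i}^{\theta_i}f'(t)[f'(\psi_i)-f'(t)]\,dt$ forces the integrand to vanish identically there. Since $f'(t)\ne 0$, this requires $f'(t)\equiv f'(\psi_i)$ on the interval; and since $f''\ne 0$ makes $f'$ strictly monotone, the interval must collapse to a point, i.e.\ $\theta_i=\psi_i$. Because the sum is a sum of same-signed terms, it can vanish only when each term vanishes, so equality in the theorem is equivalent to $\theta_i=\psi_i$ for every $i$.

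The main obstacle is the asymmetry of $\phi$ in $\theta$ and $\psi$: the expansion $\phi(\theta,\psi)=\mu(\theta-\psi)+O((\theta-\psi)^2)$ shows $\phi_i$ has no fixed sign termwise, so a direct pointwise inequality is hopeless. The role of the hypothesis $(f')^2-ff''=\mu$ is precisely to isolate the offending linear piece as the explicit term $\mu(\theta-\psi)$, which the constraint $\sum\theta_i=\sum\psi_i$ is tailor-made to annihilate; what remains is, by construction, a sign-definite integral in each of the four sign regimes of the pair $(f',f'')$, and the rest is routine case analysis.
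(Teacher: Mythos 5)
Your proof is correct, and it is essentially the integrated form of the paper's argument, packaged as a termwise identity rather than a monotonicity argument. The paper joins $\Phi$ to $\Psi$ by the segment $\theta_i(t)=t\psi_i+(1-t)\phi_i$, uses (\ref{fc51}) to replace $f f''$ in the $t$-derivative of $G(\Theta(t),\Psi)$, kills the resulting constant term with $\sum_i(\psi_i-\phi_i)=0$, and concludes that $G$ is monotone in $t$, so $G(\Phi,\Psi)$ is compared with $G(\Psi,\Psi)=0$. Your key identity $\phi(\theta,\psi)=\mu(\theta-\psi)+2\int_\psi^\theta f'(t)\bigl[f'(\psi)-f'(t)\bigr]\,dt$ (which I checked; it follows from $(ff')'=2(f')^2-\mu$ and $f(\theta)-f(\psi)=\int_\psi^\theta f'$) is exactly what one gets by integrating the paper's derivative formula coordinate by coordinate via $u=\theta_i(t)$, and you invoke the same constraint $\sum_i\theta_i=\sum_i\psi_i$, only after summation rather than inside the derivative. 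What your route buys: the deficit is exhibited as a sum of one-dimensional integrals each of fixed sign, so the four sign regimes are pure bookkeeping and the equality case ($\theta_i=\psi_i$ for every $i$) drops out term by term from continuity, $f'\neq0$ and strict monotonicity of $f'$; it also quietly repairs a small imprecision in the paper, where the strict inequality (\ref{fc58}) as stated fails at $t=1$ and for indices with $\psi_i=\phi_i$, so the paper's strict monotonicity claim really needs $t\in[0,1)$ and $\Phi\neq\Psi$. What the paper's homotopy formulation buys is mainly uniformity with the Schur-convexity style of the rest of the article; mathematically both proofs rest on the same two ingredients, the consequence $(ff')'=2(f')^2-\mu$ of (\ref{fc51}) and the common sum constraint.
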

\begin{proof}
For the convenience, the following notations will be used in the latter of this proof.

\begin{equation*}
\mathrm{\Theta}=\left(\theta_1,\theta_2,\cdots,\theta_n\right)\in \mathbb{R}^n;\ \ \ \mathrm{\Psi}=\left(\psi_1,\psi_2,\cdots,\psi_n\right)\in \mathbb{R}^n;
\end{equation*}
\begin{equation*}
H_n=\Big\{\mathrm{\Theta}\in \mathbb{R}^n,\sum_{i=1}^{n}\theta_i=ml\Big\}\quad\left(0<m<n\right);
\end{equation*}

\begin{equation*}
I_n=\left(0,l\right)^n\subset \mathbb{R}^n;\ \ \ D_n=H_n\cap I_n.
\end{equation*}

\noindent We set

\begin{equation*}
\begin{aligned}
G\left(\mathrm{\Theta},\mathrm{\Psi}\right)
&=2\sum_{i=1}^{n}{f\left(\theta_i\right)f^\prime\left(\psi_i\right)}-\sum_{i=1}^{n}{f\left(\theta_i\right)f^\prime\left(\theta_i\right)}-\sum_{i=1}^{n}{f\left(\psi_i\right)f^\prime\left(\psi_i\right)}.
\end{aligned}
\end{equation*}

Case (\emph{i}). First, if $f^{\prime\prime}\left(\theta\right)<0$ and $f^\prime\left(\theta\right)>0$ on $\left(0,l\right)$, then $f^\prime\left(\theta\right)$ is a strict decreasing function on $\left(0,l\right)$. Since $G\left(\mathrm{\Psi},\mathrm{\Psi}\right)=0$, we should demonstrate that $G\left(\mathrm{\Theta},\mathrm{\Psi}\right)\geq0$ with equality  only when $\Theta=\Psi$. It suffices for us to verify the following inequality
\begin{equation*}
G\left(\Phi,\Psi\right)>G\left(\Psi,\Psi\right)~for~any~\Phi=\left(\phi_1,\phi_2,\cdots,\phi_n\right)\neq\Psi~in~D_n.
\end{equation*}
In fact the line segment joining $\Phi$ and $\Psi$ can be defined as the following form
\begin{equation*}
\mathrm{\Theta}\left(t\right)=\left(\theta_1\left(t\right),\theta_2\left(t\right),\cdots,\theta_n\left(t\right)\right),
\end{equation*}
in which
\begin{equation*}
\theta_i\left(t\right)=t\psi_i+\left(1-t\right)\phi_i,\ \ \ i=1,2,\cdots,n,\ \ \ t \in \left[0,1\right].
\end{equation*}
Then
\begin{equation*}
\theta_i^\prime\left(t\right)=\psi_i-\phi_i,\ \ \ i=1,2,\cdots,n.
\end{equation*}
Observe that
\begin{equation}\label{fc56}
\sum_{i=1}^{n}\left(\psi_i-\phi_i\right)=ml-ml=0.
\end{equation}
Furthermore, when $\psi_i\neq\phi_i$, we have
\begin{equation}\label{in1}
\left(\psi_i-\phi_i\right)\left[f^\prime\left(\psi_i\right)-f^\prime\left(\theta_i\left(t\right)\right)\right]<0,\ \ \ i=1,2,\cdots,n,\ \ \ t \in \left[0,1\right].
\end{equation}
Notice that $f^\prime\left(\theta\right)$ is decreasing on $\left(0,l\right)$.
If $\psi_i>\phi_i$, then
\begin{equation*}
\theta_i\left(t\right)=t\psi_i+\left(1-t\right)\phi_i<t\psi_i+\left(1-t\right)\psi_i=\psi_i,
\end{equation*}
and $f^\prime\left(\theta_i\left(t\right)\right)>f^\prime\left(\psi_i\right)$.
 If $\psi_i<\phi_i$, then
\begin{equation*}
\theta_i\left(t\right)=t\psi_i+\left(1-t\right)\phi_i>t\psi_i+\left(1-t\right)\psi_i=\psi_i,
\end{equation*}
and $f^\prime\left(\theta_i\left(t\right)\right)<f^\prime\left(\psi_i\right)$.

 Hence, the  inequality (\ref{fc58}) also holds  for $0\leq t \leq1$

\begin{equation}\label{fc58}
\begin{aligned}
&\sum_{i=1}^{n}\left(\psi_i-\phi_i\right)\left[f^\prime\left(\psi_i\right)-f^\prime\left(\theta_i\left(t\right)\right)\right]f^\prime\left(\theta_i\left(t\right)\right)<0.
\end{aligned}
\end{equation}
Besides, from (\ref{fc51}) and (\ref{fc56}) we  have
\begin{equation*}
\begin{aligned}
\mu\sum_{i=1}^{n}\left(\psi_i-\phi_i\right)&=\sum_{i=1}^{n}[{f^\prime}^2\left(\theta_i\left(t\right)\right)-f\left(\theta_i\left(t\right)\right)f^{\prime\prime}\left(\theta_i\left(t\right)\right)]\left(\psi_i-\phi_i\right)
=0.
\end{aligned}
\end{equation*}
Thus
\begin{equation}\label{fc59}
\begin{aligned}
\sum_{i=1}^{n}{\left(\psi_i-\phi_i\right)f\left(\theta_i\left(t\right)\right)f^{\prime\prime}\left(\theta_i\left(t\right)\right)}&=\sum_{i=1}^{n}{\left(\psi_i-\phi_i\right)\left[f^\prime\left(\theta_i\left(t\right)\right)\right]^2}.
\end{aligned}
\end{equation}
Now, differential $G\left(\mathrm{\Theta}\left(t\right),\mathrm{\Psi}\right)$ with respect to $t$ and by  (\ref{fc59}), we have
\begin{equation}
\begin{aligned}
G^\prime\left(\mathrm{\Theta}\left(t\right),\mathrm{\Psi}\right)&=2\sum_{i=1}^{n}{f^\prime\left(\psi_i\right)f^\prime\left(\theta_i\left(t\right)\right)\left(\psi_i-\phi_i\right)}-\sum_{i=1}^{n}\left[2{f^\prime}^2\left(\theta_i\left(t\right)\right)-1\right]\left(\psi_i-\phi_i\right)
\\&=2\sum_{i=1}^{n}{f^\prime\left(\psi_i\right)f^\prime\left(\theta_i\left(t\right)\right)\left(\psi_i-\phi_i\right)}-2\sum_{i=1}^{n}\left.{f^\prime}^2\left(\theta_i\left(t\right)\right)\right.\left(\psi_i-\phi_i\right)
\\&=2\sum_{i=1}^{n}\left(\psi_i-\phi_i\right)\left[f^\prime\left(\psi_i\right)-f^\prime\left(\theta_i\left(t\right)\right)\right]f^\prime\left(\theta_i\left(t\right)\right)<0.
\end{aligned}
\end{equation}
This shows that $G\left(\mathrm{\Theta}\left(t\right),\mathrm{\Psi}\right)$ is decreasing on $\left[0,1\right]$ about the variable $t$, thus
\begin{equation*}
G\left(\mathrm{\Theta}\left(0\right),\mathrm{\Psi}\right)=G\left(\mathrm{\Phi},\mathrm{\Psi}\right)>G\left(\mathrm{\Psi},\mathrm{\Psi}\right)=G\left(\mathrm{\Theta}\left(1\right),\mathrm{\Psi}\right).
\end{equation*}

\noindent Since $\Phi$ is arbitrarily chosen,  the proof of case (i)(1) is complete.

Second, if $f^\prime\left(\theta\right)<0$ then the inequality (\ref{fc58}) is reversed and $G^\prime\left(\mathrm{\Theta}\left(t\right),\mathrm{\Psi}\right)>0$. This shows that $G\left(\mathrm{\Theta}\left(t\right),\mathrm{\Psi}\right)$ is increasing on $\left[0,1\right]$ about the variable $t$, thus
\begin{equation*}
G\left(\mathrm{\Theta}\left(0\right),\mathrm{\Psi}\right)=G\left(\mathrm{\Phi},\mathrm{\Psi}\right)<G\left(\mathrm{\Psi},\mathrm{\Psi}\right)=G\left(\mathrm{\Theta}\left(1\right),\mathrm{\Psi}\right).
\end{equation*}
\noindent Then we complete the proof of case (i)(2).

Case (ii). First, if $f^{\prime\prime}\left(\theta\right)>0$ and $f^\prime\left(\theta\right)>0$ on $\left(0,l\right)$. The proof is similar to case (i)(1), then
\begin{equation*}
\sum_{i=1}^{n}\left(\psi_i-\phi_i\right)\left[f^\prime\left(\psi_i\right)-f^\prime\left(\theta_i\left(t\right)\right)\right]f^\prime\left(\theta_i\left(t\right)\right)>0.
\end{equation*}
This shows that $G\left(\mathrm{\Theta}\left(t\right),\mathrm{\Psi}\right)$ is increasing on $\left[0,1\right]$ about the variable $t$, thus
\begin{equation*}
G\left(\mathrm{\Theta}\left(0\right),\mathrm{\Psi}\right)=G\left(\mathrm{\Phi},\mathrm{\Psi}\right)<G\left(\mathrm{\Psi},\mathrm{\Psi}\right)=G\left(\mathrm{\Theta}\left(1\right),\mathrm{\Psi}\right).
\end{equation*}

Second, if $f^{\prime\prime}\left(\theta\right)>0$ and $f^\prime\left(\theta\right)<0$ on $\left(0,l\right)$, then
\begin{equation*}
\sum_{i=1}^{n}\left(\psi_i-\phi_i\right)\left[f^\prime\left(\psi_i\right)-f^\prime\left(\theta_i\left(t\right)\right)\right]f^\prime\left(\theta_i\left(t\right)\right)<0.
\end{equation*}
This shows that $G\left(\mathrm{\Theta}\left(t\right),\mathrm{\Psi}\right)$ is decreasing on $\left[0,1\right]$ about the variable $t$, thus
\begin{equation*}
G\left(\mathrm{\Theta}\left(0\right),\mathrm{\Psi}\right)=G\left(\mathrm{\Phi},\mathrm{\Psi}\right)>G\left(\mathrm{\Psi},\mathrm{\Psi}\right)=G\left(\mathrm{\Theta}\left(1\right),\mathrm{\Psi}\right).
\end{equation*}
\end{proof}

\begin{theo}\emph{(\cite{Zhang X1997Bonnesen-style})}\label{qa}
Assume that ${\Lambda}_n$ is an  $n$-sided  planar convex polygon inscribed in a circle of radius $R$. Denote by  $L_n$ and  $A_n$  the perimeter and area  of
${\Lambda}_n$, then

\begin{equation}\label{fc511}
A_n-L_nR\cos{\sigma}+d_n\left(R\cos{\sigma}\right)^2\le0,{\ \ \ \ \ d}_n=n\tan{\frac{\pi}{n}}.
\end{equation}
The equality holds when and only when $\theta_1=\cdots=\theta_n=\sigma$.
\end{theo}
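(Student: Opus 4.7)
The plan is to derive Theorem \ref{qa} as a direct specialization of Theorem \ref{qe}(i)(1) applied to $f(\theta) = \sin\theta$ on the interval $(0, \pi/2)$. First I would set up the standard trigonometric parametrization of the inscribed polygon: letting $2\theta_i$ denote the central angle subtended by the $i$th side of $\Lambda_n$ at the center of the circumscribing circle, convexity forces $\theta_i \in (0, \pi/2)$, and one has $\sum_{i=1}^n \theta_i = \pi$ (so that the average $\sigma$ equals $\pi/n$). Decomposing $\Lambda_n$ into $n$ isosceles triangles sharing the circumcenter as a common apex yields the identities $a_i = 2R\sin\theta_i$, hence $L_n = 2R\sum_{i=1}^n \sin\theta_i$, and $A_n = R^2 \sum_{i=1}^n \sin\theta_i \cos\theta_i$.

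Next I would verify that $f(\theta) = \sin\theta$ satisfies every hypothesis of Theorem \ref{qe}(i)(1) on $(0, \pi/2)$: it is positive and $C^2$, with $f'(\theta) = \cos\theta > 0$, $f''(\theta) = -\sin\theta < 0$, $f'(\theta) f''(\theta) \neq 0$, and the key algebraic identity
\begin{equation*}
(f'(\theta))^2 - f(\theta) f''(\theta) = \cos^2\theta + \sin^2\theta = 1,
\end{equation*}
so (\ref{fc51}) holds with $\mu = 1$. Thus case (i)(1) of Theorem \ref{qe} applies to any two tuples $\Theta, \Psi \in (0,\pi/2)^n$ with common coordinate sum.

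I would then specialize by choosing $\psi_i = \sigma = \pi/n$ for every $i$; this is admissible because $\sum_{i=1}^n \psi_i = \pi = \sum_{i=1}^n \theta_i$. The conclusion of Theorem \ref{qe}(i)(1) becomes
\begin{equation*}
2 \cos\sigma \sum_{i=1}^n \sin\theta_i \;\geq\; \sum_{i=1}^n \sin\theta_i \cos\theta_i + n \sin\sigma \cos\sigma.
\end{equation*}
Multiplying through by $R^2$ and recognizing the three terms geometrically gives $2R^2\cos\sigma \sum \sin\theta_i = L_n R \cos\sigma$, $R^2\sum \sin\theta_i\cos\theta_i = A_n$, and $nR^2\sin\sigma\cos\sigma = d_n R^2 \cos^2\sigma$ (since $d_n = n\tan\sigma$ immediately yields $d_n \cos^2\sigma = n\sin\sigma\cos\sigma$). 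Rearranging yields exactly (\ref{fc511}), and the equality clause of Theorem \ref{qe} translates into $\theta_i = \sigma$ for every $i$, i.e., $\Lambda_n$ is regular.

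There is no real obstacle here: the substantive work has been done in Theorem \ref{qe}, and the proof reduces to selecting the correct test function $f = \sin$ and collapsing one of its two arguments to the constant tuple $\sigma$. The only mildly delicate points are ensuring that convexity of $\Lambda_n$ keeps all $\theta_i$ inside $(0,\pi/2)$ (so that the sign conditions $f'>0$, $f''<0$ of case (i)(1) are in force) and that $\Theta$ and $\Psi = (\sigma,\dots,\sigma)$ lie in the same constraint set $D_n$.
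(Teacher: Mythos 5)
Your proposal is correct and follows essentially the same route as the paper: the paper also specializes Theorem \ref{qe}(i)(1) to $f(\theta)=\sin\theta$ with $\psi_i=\sigma=\pi/n$, obtains $2\cos\sigma\sum_{i=1}^n\sin\theta_i\geq\sum_{i=1}^n\sin\theta_i\cos\theta_i+d_n\cos^2\sigma$, and then substitutes $L_n=2R\sum\sin\theta_i$ and $A_n=R^2\sum\sin\theta_i\cos\theta_i$ to reach (\ref{fc511}). Your explicit verification of the hypotheses of Theorem \ref{qe} (positivity, $\mu=1$, sign conditions) is a slightly fuller write-up of the same argument.
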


\begin{proof}
 Let $f\left(\theta_i\right)=\sin{\theta_i}$, $\theta_i\in(0,\frac{\pi}{2})$ and $f\left(\psi_i\right)=\sin{\sigma}$ where $\psi_i=\sigma=\frac{\pi}{n}$, $i=1,2,\cdots,n (n\ge3)$. Then the following inequality is the special case of (\ref{fc52}), that is

\begin{equation}\label{fc512}
2\cos{\sigma\sum_{i=1}^{n}{\sin\theta_i}}\geq\sum_{i=1}^{n}{\sin\theta_i\cos{\theta_i}}+d_n\left(\cos{\sigma}\right)^2.
\end{equation}

Denote by $a_i$ the length of the $i$th side of ${\Lambda}_n$ and $\theta_i$ the half of the central angle subtended by the $i$th side of ${\Lambda}_n$, $i=1,2,\cdots,n$, then
\begin{figure}[h]
	\centering
	\includegraphics[width=4.6cm]{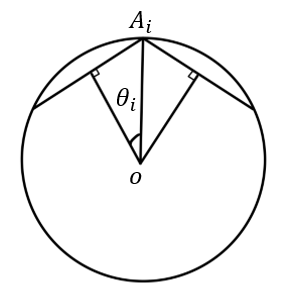}
	\label{2.png}
\end{figure}
\begin{equation}\label{fc513}
L_n=\sum_{i=1}^{n}a_i=\sum_{i=1}^{n}{2R\sin{\theta_i}};\ \ \ A_n=\sum_{i=1}^{n}{\frac{1}{2}a_iR\cos{\theta_i}}=\sum_{i=1}^{n}R^2\sin{\theta_i}\cos{\theta_i};
\end{equation}
\begin{equation}\label{fc514}
L_n^\ast=2Rn\sin{\frac{\pi}{n}};\ \ \ A_n^\ast=R^2n\sin{\sigma\cos{\sigma}}.
\end{equation}
Substituting (\ref{fc513}) and (\ref{fc514}) into (\ref{fc512}), then (\ref{fc511}) is obtained.
\end{proof}

\begin{theo}\emph{(\cite{Ma L2015A Bonnesen-style})}
Assume that ${\Lambda}_n$ is the planar convex $n$-sided polygon with perimeter  $L_n$ and area $A_n$. And ${\Lambda}_n$ is inscribed in a circle with $R$ as its radius. Meanwhile, there exist a regular polygon with perimeter $L_n^\ast$ and area $A_n^\ast$ inscribed in the same circle with ${\Lambda}_n$, then
\begin{equation}\label{fc515}
L_n^2-4d_nA_n\geq\frac{1}{R^2}\left(A_n^\ast-A_n\right)^2,\ \ \ \ d_n=n\tan{\frac{\pi}{n}}.
\end{equation}
The equality holds when and only when ${\Lambda}_n$ is regular polygon.
\end{theo}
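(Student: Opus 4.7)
The plan is to deduce Ma's inequality directly from Zhang's inequality (Theorem \ref{qa}) by a squaring argument, using the key identity $d_n R^2\cos^2\sigma = A_n^\ast$ with $\sigma = \pi/n$. First I would rewrite (\ref{fc511}) in the form $L_n R\cos\sigma \geq A_n + d_n R^2\cos^2\sigma$. Since $d_n\cos^2\sigma = n\tan(\pi/n)\cos^2\sigma = n\sin\sigma\cos\sigma$ and $A_n^\ast = nR^2\sin\sigma\cos\sigma$, this collapses to the clean relation
\begin{equation*}
L_n R\cos\sigma \;\geq\; A_n + A_n^\ast .
\end{equation*}
Both sides are positive, so I can square to obtain $L_n^2 R^2\cos^2\sigma \geq (A_n+A_n^\ast)^2 = A_n^2 + 2A_nA_n^\ast + (A_n^\ast)^2$.

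Next I would subtract a suitable multiple of $A_nA_n^\ast$ to turn the right side into $(A_n-A_n^\ast)^2$. The identity that makes this work is
\begin{equation*}
4d_nA_n\cdot R^2\cos^2\sigma \;=\; 4A_n\cdot d_n R^2\cos^2\sigma \;=\; 4A_nA_n^\ast.
\end{equation*}
Subtracting this from both sides of the squared inequality yields
\begin{equation*}
(L_n^2 - 4d_nA_n)\,R^2\cos^2\sigma \;\geq\; A_n^2 - 2A_nA_n^\ast + (A_n^\ast)^2 \;=\; (A_n^\ast - A_n)^2.
\end{equation*}
Dividing by $R^2\cos^2\sigma > 0$ gives $L_n^2 - 4d_nA_n \geq (A_n^\ast - A_n)^2/(R^2\cos^2\sigma)$. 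Finally, since $n\geq 3$ we have $0 < \cos^2\sigma \leq 1$, so $1/(R^2\cos^2\sigma) \geq 1/R^2$, and the desired inequality (\ref{fc515}) follows at once.

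For the equality case, I would trace the two steps: equality in the last amplification forces $\cos^2\sigma = 1$ (impossible for $n\geq 3$) or $A_n = A_n^\ast$; and equality in Zhang's inequality (\ref{fc511}) forces $\theta_1 = \cdots = \theta_n = \sigma$, i.e. $\Lambda_n$ is regular. Conversely, for a regular polygon both chains become equalities trivially.

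The main obstacle, and the point at which Ma's original proof presumably slipped, is recognizing that the squaring step produces the wrong denominator $R^2\cos^2\sigma$ rather than $R^2$, and that one must explicitly invoke $\cos^2\sigma \leq 1$ to recover the stated form. Everything else is a short algebraic manipulation and the recognition of the compact identity $d_n R^2\cos^2\sigma = A_n^\ast$, which turns the squared Zhang inequality into an expression whose cross term $+2A_nA_n^\ast$ is precisely half of the subtracted quantity $4A_nA_n^\ast$, converting a sum of squares into a difference of squares.
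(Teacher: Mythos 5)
Your proposal is correct and follows essentially the same route as the paper: both square Zhang's inequality (Theorem \ref{qa}), subtract $4d_nA_n$ to produce the perfect square $\left(A_n^\ast-A_n\right)^2/\left(R\cos\sigma\right)^2$, and then relax $1/\cos^2\sigma\ge 1$ to obtain (\ref{fc515}); your early use of the identity $d_nR^2\cos^2\sigma=A_n^\ast$ is only a cosmetic streamlining of the paper's substitution at the end.
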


\begin{proof}
From Theorem \ref{qa}, we have
\begin{equation}\label{fc516}
\begin{aligned}
L_n^2 &\geq \left[\frac{A_n}{R\cos{\sigma}}+d_n\left(R\cos{\sigma}\right)\right]^2
\\&=\frac{A_n^2}{\left(R\cos{\sigma}\right)^2}+2d_nA_n+d_n^2\left(R\cos{\sigma}\right)^2.
\end{aligned}
\end{equation}
(\ref{fc516}) is equivalent to
\begin{equation*}
\begin{aligned}
L_n^2-4d_nA_n &\geq \frac{A_n^2}{\left(R\cos{\sigma}\right)^2}-2d_nA_n+d_n^2\left(R\cos{\sigma}\right)^2
\\&=\left[\frac{A_n}{R\cos{\sigma}}-d_n\left(R\cos{\sigma}\right)\right]^2
\\&=\frac{1}{\left(R\cos{\sigma}\right)^2}\left(A_n-R^2n\sin{\sigma \cos{\sigma}}\right)^2
\\&=\frac{1}{\left(R\cos{\sigma}\right)^2}\left(A_n^\ast-A_n\right)^2.
\end{aligned}
\end{equation*}
Notice that $A_n^\ast=R^2n\sin{\sigma\cos{\sigma}}$, $d_n=n\tan{\sigma}$, and $\sigma=\frac{\pi}{n}$. It follows that

\begin{equation*}
\begin{aligned}
L_n^2-4d_nA_n &\geq \frac{\cos^2{\sigma}+\sin^2{\sigma}}{\left(R\cos{\sigma}\right)^2}\left(A_n^\ast-A_n\right)^2
\\&=\frac{1}{R^2}\left(1+{tan}^2{\sigma}\right)\left(A_n^\ast-A_n\right)^2
\\&\geq\frac{1}{R^2}\left(A_n^\ast-A_n\right)^2.
\end{aligned}
\end{equation*}
Therefore
\begin{equation*}
L_n^2-4d_nA_n\geq\frac{1}{R^2}\left(A_n^\ast-A_n\right)^2.
\end{equation*}
\end{proof}


\begin{thebibliography}{99}


\bibitem{Blaschke1949Chelsea} W. Blaschke, Kreis und Kugel.(German) Chelsea Publishing Co., New York, 1949.

\bibitem{Gao X2012A new reverse} X. Gao, A new reverse isoperimetric inequality and its stability, Math. Inequal. Appl. 15(2012), 733-743.

\bibitem{Groemer H1996Geometric} H. Groemer, Geometric applications of Fourier series and spherical harmonics, Encyclopedia Math. Appl., vol. 61, Cambridge Univ. Press, Cambridge, 1996.

\bibitem{Hardy G1988Inequalities} G. Hardy, J. Littlewood, G. Polya, Inequalities, Cambridge Math. Library, Cambridge Unive. Press, Cambridge, 1988.

\bibitem{Hurwitz A1902Sur} A. Hurwitz, Sur quelques applications g\'eom\'etriques des s\'eries de Fourier (French), Ann. Sci. \'Ecole Norm. Sup. 19(1902), 357-408.



\bibitem{Ma L2015A Bonnesen-style} L. Ma, A Bonnesen-style inequality for the planar convex polygon, J. of Math.(PRC) 35(2015), 154-158.

\bibitem{Albert1979Inequalities} A. W. Marshall, I. Olkin, Inequalities: Theory of Majorization and Its Applications, Academic Press, 1979.

\bibitem{Ossermam1978The} R. Osserman, The isoperimetric inequality, Bull. Amer. Math. Soc. (84)1978, 1182-1238.

\bibitem{Ossermam1979} R. Osserman, Bonnesen-style isoperimetric inequalities, Amer. Math. Monthly 86(1979), 1-29.


\bibitem{Ren D1994 Topics} D. Ren, Topics in integral geometry, World Scientific Publishing Co., Inc., River Edge, NJ, 1994.

\bibitem{Schur I1923Uber} I. Schur, Uber eine Klasse von Mittelbildungen mit Anwendungen auf die Determinantentheorie, Sitzunsber. Berlin. Math. Ges. 22(1923) 9-20.

\bibitem{Tang Discrete wirtinger} D. Tang, Discrete wirtinger and isoperimetric type inequalities, Bull. Austral. Math. Soc. 43(1991) 467-474.

\bibitem{Wang W2018Schur} J. Qi, W. Wang,  Schur convex functions and the Bonnesen-style isoperimetric inequalities for planar convex polygons, J. Math. Inequal. 12(2018), 23-29.


\bibitem{Wayne A1973Convex Functions} A. Wayne, E. Dale, Convex functions, Pure and Applied Mathematics, vol. 57, Academic Press, New York-London, 1973.


\bibitem{Zeng C2010The Bonnesen isoperimetric} C. Zeng, L. Ma, J. Zhou, The Bonnesen isoperimetric inequality in a surface of constant curvature, Sci. China. Math. 55(2012), 1913-1919.

\bibitem{Zeng2012Thesymmetric} C. Zeng, J. Zhou, S. Yue, The symmetric mixed isoperimetric inequality of two planar convex domains, Acta Math. Sinica (Chinese Series) 55(2012), 355-362.

\bibitem{Zhang X1996ARefine} X. Zhang, A Refinement of the Discrete Wirtinger Ineqality, J.  Math. Anal. Appl. 200(1996), 687-697.

\bibitem{Zhang X1997Bonnesen-style} X. Zhang, Bonnesen-style inequalities and Pseudo-perimeters for polygons, J. Geom. 60(1997), 188-201.

\bibitem{Zhang X1998Schur-Convex}X. Zhang, Schur-convex functions and isoperimetric inequalities, Proc. Amer. Math. Soc. 126(1998), 461-470.


\bibitem{Zhang G1999The affine Sobolev} G. Zhang, The affine Sobolev inequality, J. Differential Geom. 53(1999), 183-202.

\bibitem{Kinematic1} J. Zhou, Kinematic formulas for mean curvature powers of hypersurfaces and Hadwiger's theorem in $\mathbb R^{2n}$, Trans. Amer. Math. Soc. 345(1994), 243-262.
\bibitem{Kinematic2} J. Zhou, Kinematic formula for square mean curvature of hypersurfaces, Bull. Inst. Math. Acad. Sinica 22(1994), 31-47.
\end{thebibliography}
\end{document}